\theoremstyle{plain}
    \newtheorem{thm}{Theorem}[section]
    \newtheorem{prop}{Proposition}[section]
    \newtheorem{lemma}{Lemma}[section]
\numberwithin{equation}{section}
\begin{document}

\title{Uniqueness, symmetry and convergence  of   positive ground state solutions  of   the Choquard   type equation on a  ball \thanks{The first author was partially supported by the  Natural Science Foundation of Hunan Province (Grant No.
2020JJ5151) , the National Natural Science Foundation of China (Grant No. 12001188) and  Scientific Research Fund of Hunan Provincial Education Department (Grant No. 19C0781); the second author  was partially supported by the
National Natural Science Foundation of P. R. China ( No.
11971494).}}

\author{ 
Hui Guo\\
  College of Mathematics and Computing Science,\\
Hunan University of Science and Technology, \\
Xiangtan, Hunan 411201, P. R. China\\
Email: huiguo\_math@163.com;\\
Tao Wang\\
  College of Mathematics and Computing Science,\\
Hunan University of Science and Technology, \\
Xiangtan, Hunan 411201, P. R. China\\
Email: wt\_61003@163.com;\\
  Taishan Yi \\
  School of Mathematics (Zhuhai),\\
Sun Yat-sen University,\\
Zhuhai, Guangdong 519082,  P. R. China\\
Email:yitaishan@mail.sysu.edu.cn
\\}
\date {}
\maketitle

\begin{abstract}
This paper is  concerned with the qualitative properties of  the positive ground state solutions  to the nonlocal  Choquard type equation on a ball $B_R$.  First,  we prove the  radial symmetry of  the positive ground state solutions by using Talenti's inequality.  Next we develop Newton's Theorem and then resort to the contraction mapping principle to  establish  the  uniqueness of  the positive ground state solutions. Finally, by constructing cut-off functions and applying energy comparison method, we show the convergence of  the   positive ground state solutions  as $R\to \infty$.   Our results generalize  and improve the existing ones in the  literature.
 \end{abstract}
\bigskip
{\bf Key words} {\em Nonlocal  equation; Ground state solution; Uniqueness; Symmetry; Newton's theorem. }

\section{Introduction}
The interesting  stationary Choquard    equation in the whole space $\mathbb{R}^N$ with   $N\geq3$,
\begin{equation}\label{model1}
-\Delta u+u=(\int_{\mathbb{R}^N}\frac{|u(y)|^2}{|x-y|^{N-2}}dy)u, \mbox{\ in\ } \mathbb{R}^N
\end{equation}
which  is usually called
Schr\"{o}dinger-Newton equation,  has several physical origins. When $N=3$, the nonlocal  problem \eqref{model1}
 becomes the classical stationary Choquard-Pekar
equation
 \begin{equation}\label{model2}
-\Delta
u+u=(\int_{\mathbb{R}^3}\frac{|u(y)|^2}{|x-y|}dy)u,\
 \mbox{\ in\ } \mathbb{R}^3.
\end{equation}
Equation \eqref{model2} first appeared at least as early as in
1954, in a work by Pekar describing the quantum mechanics of polaron
at rest. In 1976, Choquard used \eqref{model2} to model an
electron trapped in its own hole, in a certain  approximation to
Hartree-Fock theory of one-component plasma \cite{LEH}. This
equation also arises in many interesting situations related to the
quantum theory of large systems of nonrelativistic bosonic atoms and
molecules, see \cite{Lieb-Simon}.
In mathematical contents, the existence and qualitative properties of solutions for \eqref{model1} have been studied widely and intensively in the literature. See \cite{ccs,gmv,gmvj,LEH,LPL,vv} and references therein for the existence of ground state  solutions, multiple  solutions  and nodal solutions to \eqref{model1}. About the qualitative properties such as regularity, symmetry, uniqueness and decay of nontrivial solutions of \eqref{model1}, we can refer to \cite{cp,vv,MZ,rd,wy,wy1,xcl} for instance.

As we know,  by  rescaling, \eqref{model1} is equivalent to
\begin{equation}\label{model3}
\left\{
\begin{array}{lll}
-\Delta u+u=wu,\ \mbox{in }\mathbb{R}^N,\\
-\Delta w=|u|^2,\ \mbox{in }\mathbb{R}^N.
\end{array}\right.
\end{equation}
Let $B_R\subset \mathbb{R}^N$ be an open  ball domain with radius $R>0$ centered at the origin. Then the Dirichlet problem of \eqref{model3}  on  $B_R$ is
\begin{equation}\label{model4}
\left\{
\begin{array}{lll}
-\Delta u+u=wu\ \mbox{in } B_R,\\
-\Delta w=|u|^2\ \mbox{in } B_R,\\
w=u=0\  \mbox{in }  \partial B_R.
\end{array}\right.
\end{equation}
By using Green's function, \eqref{model4}
can  be rewritten as the following Choquard type equation on a ball
\begin{equation}\label{model5}
-\Delta u+u=\left(\int_{B_R}G(x,y)|u^2(y)|dy\right)u,\ \ x\in B_R.
\end{equation}
We point out that
\begin{displaymath}
G(x,y)=\frac{1}{|y-x|^{N-2}}-\frac{1}{(\frac{|x|}{R}|y-\tilde{x}|)^{N-2}},\ (x,y\in B_R \mbox{ with } x\neq y),
\end{displaymath}
where $\tilde{x}=\frac{R^2x}{|x|^2}$ is  the dual point of  $x$ with respect to $\partial B_R$. Clearly, $G(x,y)>0$, $G(x,y)=G(y,x)$  for $x,y\in B_R$ and $x\neq y$.

If  a function $u$ is said to be a ground state solution of  \eqref{model1} (or \eqref{model5}), if $u$ solves \eqref{model1} (or \eqref{model5}) and
minimizes the energy functional associated with \eqref{model1} (or \eqref{model5})  among all possible nontrivial solutions. In the whole space $\mathbb{R}^N$, Moroz and
Van Schaftingen \cite{vv} obtained   the regularity, radial
symmetry and asymptotic behavior of    positive ground state solutions of  the Choquard equation \eqref{model1}.  This combined with the uniqueness of positive radial solutions of  \eqref{model1} (see \cite{wy}), implies that the positive ground state solutions of   \eqref{model1} are uniquely determined, up to translations.
On a ball $B_R$,  Wang and Yi   \cite{wy1} proposed that the positive ground state solutions of \eqref{model5} may be only  axially symmetric by using its separability property.
 When $N=3$,  Feliciangeli  and  Seiringer \cite{fs} proved the radial symmetry and uniqueness of   positive solutions with a  prescribed $L^2-$ norm for the Choquard  type equation \eqref{model5}.
Motivated by the above work,   one natural question is to ask  whether the positive ground state solution of  \eqref{model5} on a ball $B_R$ is radially symmetric and unique  when $N\geq3$. We know  that for the classical  local elliptic equations
\begin{equation}\label{model40}
-\Delta u+u=u^p  \ \mbox{\ in } B_R,
\end{equation}
with $p>1$, Gidas,  Ni and  Nirenberg  \cite{gn} showed  us that all the positive ground state solution are radially symmetric on a ball $B_R$. Later,
Kwong \cite{kwk}  established the uniqueness of the positive, radially symmetric solution to \eqref{model40}. This yields that the positive ground state solution to \eqref{model40} is unique.  For more results on  the uniqueness  for the general local elliptic equations, we can refer to \cite{cvc,ms} and references therein.   However,  compared with the problem on the  uniqueness for the  local elliptic equations, the appearance of  convolution term makes the nonlocal  Choquard equations extremely difficult to handle. In order to obtain our main results,   we first prove the radial symmetry of all the positive ground state solutions of \eqref{model5}.  Then we resolve the convolution term by  developing Newton's Theorem, which  allows us to shift the uniqueness study to the ordinary differential equation.
Finally, we investigate the asymptotic behavior of the unique  positive ground state solution of  \eqref{model5}.

The remainder of this paper is organized as follows. In Section 2, some notations and  preliminary results are presented.   Section 3 is  devoted to
the proof  of the radial symmetry of  all the positive  ground state solutions   to    \eqref{model5}  with $N\geq3$  by using Talenti's inequality.
In Section 4,  we develop Newton's Theorem and then take  advantage of the  contraction mapping principle to  establish  the  uniqueness of  the positive ground state solutions to    \eqref{model5} when $N=3,4,5,6$.  In Section 5, by constructing cut-off functions and applying energy comparison method, we show  that the unique  positive ground state solution of     \eqref{model5} converges to  the unique  positive ground state solution of     \eqref{model1} as $R\to \infty$.

\section{Preliminaries}

In this section, some notations are collected as follows.

  \noindent $\bullet$ Let $N\in \mathbb{N}$ and $H^1(\mathbb{R}^N)$ is the Sobolev space with standard norm
$\|u\|=(\int_{\mathbb{R}^N}|\nabla u|^2+|u|^2)^{\frac{1}{2}}$.
We can identify $u\in H_0^1(\Omega)$ with its extensions to $\mathbb{R}^N$ obtained by setting $u=0$ in $\mathbb{R}^N\backslash \Omega.$\\
$\bullet$  The dual space of  $H^{1}(\mathbb{R}^N)$ is denoted by $H^{-1}(\mathbb{R}^N)$.\\
$\bullet$ Let $\langle\cdot,\cdot\rangle$ be the duality pairing between $H^{1}(\mathbb{R}^N)$ and $H^{-1}(\mathbb{R}^N).$\\
\noindent$\bullet$ Let  $\mathbb{S}_{r,x}^{N-1}$ be
the sphere with radius $r>0$ centered at the point $x$ in $\mathbb{R}^N$.  $|\mathbb{S}_{r,x}^{N-1}|$  denotes its $N-1$ dimensional area. For simplicity, $\mathbb{S}^{N-1}$ and $|\mathbb{S}^{N-1}|$ denote the sphere of unit radius centered at the origin and its $N$ dimensional area, respectively.\\
 $\bullet$ For $1\leq s<\infty$, $L^s(\Omega)$ denotes the Lebesgue space with the norm
  $\|u\|_{L^s(\Omega)}=\left(\int_{\Omega} |u|^sdx \right)^{\frac{1}{s}}.$\\
  $\bullet$  $C$ may  represent  different  positive constants.

As usual, for $N\geq3$,  the corresponding energy functional  $I_R:H^1_0(B_R)\to \mathbb{R}$ associated to \eqref{model5} is
\begin{equation}\label{18}
I_R(u)=\frac{1}{2}\int_{B_R}(|\nabla u|^2+|u|^2)dx-\frac{1}{4}\int_{B_R}\int_{B_R}G(x,y)|u(y)|^2|u(x)|^2 dxdy,
\end{equation}
due to the symmetry and positivity of $G(x,y)$ for $x,y\in B_R$ and $x\neq y$.
 It is easy to check that  $I_R$ is $C^1-$functional
and its Gateaux  derivative is given by
$$\langle I_R'(u),v\rangle=\int_{B_R}(\nabla u\nabla v+uv)dx-\int_{B_R}\int_{B_R}G(x,y)|u(y)|^2u(x)v(x)dxdy$$
for any $v\in H^1_0(B_R).$
Recall that the critical points of $I_R$ are  solutions of \eqref{model5} in the weak sense.
Let $$c_R:=\inf\limits_{u\in\mathcal{N}_R}I_R(u),$$ where the Nehari manifold
$$\mathcal{N}_R=\{u\in H_0^1(B_R)\backslash{\{0\}}:\langle I_R'(u),u\rangle=0\}.$$

For the sake of convenience,
$$\mathbb{D}(u):=\int_{B_R}\int_{B_R}G(x,y)|u(y)|^2|u(x)|^2dxdy.$$
The proof of the following properties of the Nehari manifold $\mathcal{N}_R$  is standard and hence is omitted here.
\begin{lemma}\label{lemma7}
The following statements are true:
\begin {itemize}
\item[\rm(i)] $0\notin \partial\mathcal{N}_R$ and $c_R>0$;
\item[\rm(ii)] For any \,$u\in H^1_0(B_R)\backslash\{0\}$\,,
there exists a unique  $t_u\in(0,\infty)$ such that $t_uu\in \mathcal{N}_R$ and
 $t_u=\left(\frac{\|u\|^2}{\mathbb{D}(u)}\right)^\frac{1}{2}.$
Furthermore,
\begin{equation}\label{19}
I_R(t_u u)=\sup\limits_{t>0}I_R(tu)=\frac{1}{4}\frac{\|u\|^4}{\mathbb{D}(u)};
\end{equation}
\item[\rm(iii)] $c_R=\inf\limits_{u\in\mathcal{N}_R} I_R(u)=\inf\limits_{u\in H^1_0(B_R)\backslash\{0\}}\sup\limits_{t>0}I_R(tu)$.
\end{itemize}
\end{lemma}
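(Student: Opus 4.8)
The statement is entirely elementary once one records two facts about the quartic functional $\mathbb{D}$, so I would open with those. First, since $G(x,y)>0$ on $(B_R\times B_R)\setminus\{x=y\}$, we have $\mathbb{D}(u)>0$ whenever $u\neq 0$, and $\mathbb{D}$ is positively $4$-homogeneous, $\mathbb{D}(tu)=t^4\mathbb{D}(u)$ for $t\ge 0$. Second, from $0<G(x,y)\le|x-y|^{-(N-2)}$ together with the Hardy--Littlewood--Sobolev inequality and the Sobolev embedding of $H_0^1(B_R)$ into the relevant $L^q$ space, there is $C=C(N,R)>0$ with
$$0<\mathbb{D}(u)\le C\|u\|^4\qquad\text{for every }u\in H_0^1(B_R)\setminus\{0\}.$$
This bound is the only point where analysis (rather than one-variable calculus) is used, and it is standard; I would prove it first and then treat (i)--(iii) in the order (ii), (iii), (i).

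For (ii), fix $u\in H_0^1(B_R)\setminus\{0\}$ and study the fibering map
$$\phi_u(t):=I_R(tu)=\frac{t^2}{2}\|u\|^2-\frac{t^4}{4}\mathbb{D}(u),\qquad t\ge 0,$$
using the homogeneity above. Then $\phi_u(0)=0$, $\phi_u'(t)=t\bigl(\|u\|^2-t^2\mathbb{D}(u)\bigr)$, and since $\phi_u'(t)=\langle I_R'(tu),u\rangle$ we get $\langle I_R'(tu),tu\rangle=t^2\|u\|^2-t^4\mathbb{D}(u)$; hence for $t>0$ the relation $tu\in\mathcal{N}_R$ holds if and only if $t^2=\|u\|^2/\mathbb{D}(u)$, giving the unique $t_u=(\|u\|^2/\mathbb{D}(u))^{1/2}$. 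Because $\phi_u$ strictly increases on $(0,t_u)$ and strictly decreases on $(t_u,\infty)$, it follows that $I_R(t_uu)=\sup_{t>0}I_R(tu)$, and plugging $t_u^2=\|u\|^2/\mathbb{D}(u)$ into $\phi_u$ yields $\phi_u(t_u)=\tfrac14\|u\|^4/\mathbb{D}(u)$, which is \eqref{19}. In particular $\mathcal{N}_R\neq\emptyset$.

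Statement (iii) then drops out: if $u\in\mathcal{N}_R$ then $\|u\|^2=\mathbb{D}(u)$ forces $t_u=1$, so $I_R(u)=\sup_{t>0}I_R(tu)$ by (ii), whence $\inf_{\mathcal{N}_R}I_R=\inf_{\mathcal{N}_R}\sup_{t>0}I_R(tu)$; and for arbitrary $v\neq 0$ the point $t_vv\in\mathcal{N}_R$ attains $\sup_{t>0}I_R(tv)$, so the two infima over $\mathcal{N}_R$ and over $H_0^1(B_R)\setminus\{0\}$ coincide. For (i), on $\mathcal{N}_R$ the identity $\|u\|^2=\mathbb{D}(u)$ combined with $\mathbb{D}(u)\le C\|u\|^4$ gives $\|u\|^2\ge 1/C$ for every $u\in\mathcal{N}_R$; hence $\mathcal{N}_R$ is bounded away from $0$ in $H_0^1(B_R)$, so $0\notin\partial\mathcal{N}_R$, and since $I_R(u)=\tfrac12\|u\|^2-\tfrac14\mathbb{D}(u)=\tfrac14\|u\|^2$ on $\mathcal{N}_R$ we conclude $c_R\ge\tfrac1{4C}>0$.

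I do not expect any real obstacle: the whole argument is the standard Nehari-manifold bookkeeping, and the only mildly technical point is the quartic estimate $\mathbb{D}(u)\le C\|u\|^4$, which is routine on the bounded domain $B_R$. This is presumably why the authors state that the proof is standard and omit it; I have sketched it only for completeness.
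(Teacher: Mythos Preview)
Your argument is exactly the standard Nehari-manifold computation that the paper has in mind; indeed, the authors explicitly write that ``the proof of the following properties of the Nehari manifold $\mathcal{N}_R$ is standard and hence is omitted here,'' so there is no proof in the paper to compare against beyond this remark. Your sketch is correct and complete for the dimensions the paper ultimately works in; the only point worth flagging is that the Hardy--Littlewood--Sobolev plus Sobolev embedding step you invoke for $\mathbb{D}(u)\le C\|u\|^4$ requires $4N/(N+2)\le 2N/(N-2)$, i.e.\ $N\le 6$, which is consistent with the range $N=3,4,5,6$ in the paper's main theorems.
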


By using  standard Nehari  manifold method,  we can obtain the existence of ground state solutions  of  \eqref{model5} in $H^1_0(B_R)$ (see \cite{wy1}).
In the following parts, we always assume that   $\phi_R\in H^1_0(B_R)$ is a  ground state solution of \eqref{model5}.

\section{Positivity and  symmetry}

In this section, we shall prove the positivity and radial symmetry of ground state solutions  of  \eqref{model5}.  In order to achieve it, we need the following Talenti's inequality.
\begin{lemma}\label{lemma6}(see \cite[Theorem 2.4]{fs})
Let $0\leq f\in L^2(B_R)$, and let $u,v\in H^1_0(B_R)$ solve
\begin{equation*}\left\{
\begin{array}{lll}
-\Delta u=f, \ x\in B_R,\\
u=0,\ x\in \partial B_R,
\end{array}\right.
\end{equation*}
and
\begin{equation*}\left\{
\begin{array}{lll}
-\Delta v=f^*, \ x\in B_R,\\
v=0,\ x\in \partial B_R.
\end{array}\right.
\end{equation*}
Then $u^*\leq v$ a.e. in $B_R$. If additionally, $u^*(x_0)=v(x_0)$ for
some $x_0$ with $|x_0|=t\in (0,R),$ then $u(x)=v(x)$ and $f(x)=f^*(x)$ for
all $x$ with $t\leq|x|\leq R.$ Here $u^*$ and $f^*$ are the symmetric decreasing rearrangement of $u$ and $f$, respectively.
\end{lemma}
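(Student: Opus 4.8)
\medskip
\noindent\emph{Sketch of a proof.}
The plan is to carry out the classical rearrangement (Talenti) argument, reducing the desired inequality to a one-dimensional comparison between the radial profile of $u^{*}$ and the already radial function $v$, obtained from a first-order differential inequality for the distribution function of $u$. I would first note that, since $f\ge 0$ and $f^{*}\ge 0$, the weak maximum principle gives $u\ge 0$ and $v\ge 0$ in $B_{R}$; that $u\in H^{1}_{0}(B_{R})$ forces $|\{u>0\}|\le |B_{R}|$, so $u^{*}$ is supported in $B_{R}$ with $u^{*}=0$ on $\partial B_{R}$; and that $v$, solving a radial ODE with nonnegative right-hand side, is radial and radially nonincreasing. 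Writing $U(r)$ for the common value of $u^{*}$ on the sphere of radius $r$, it then suffices to prove $U(r)\le v(r)$ for a.e.\ $r\in(0,R)$.

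For $\tau\ge 0$ put $\mu(\tau)=|\{x\in B_{R}:u(x)>\tau\}|$, so that $U$ is determined by $\mu(U(r))=\omega_{N}r^{N}$ with $\omega_{N}=|B_{1}|$. Integrating $-\Delta u=f$ over the super-level set $\{u>\tau\}$ and using the divergence theorem on $\{u=\tau\}$ (a regular level for a.e.\ $\tau$, by Sard's theorem) gives
\begin{equation*}
\int_{\{u>\tau\}}f\,dx=\int_{\{u=\tau\}}|\nabla u|\,d\sigma .
\end{equation*}
Combining the coarea identity $-\mu'(\tau)=\int_{\{u=\tau\}}|\nabla u|^{-1}\,d\sigma$ with the Cauchy--Schwarz inequality on $\{u=\tau\}$ yields, with $P$ denoting perimeter,
\begin{equation*}
P(\{u>\tau\})^{2}=\Big(\int_{\{u=\tau\}}d\sigma\Big)^{2}\le\Big(\int_{\{u=\tau\}}|\nabla u|\,d\sigma\Big)\Big(\int_{\{u=\tau\}}|\nabla u|^{-1}\,d\sigma\Big)=\Big(\int_{\{u>\tau\}}f\,dx\Big)\big(-\mu'(\tau)\big).
\end{equation*}
The isoperimetric inequality $P(\{u>\tau\})\ge N\omega_{N}^{1/N}\mu(\tau)^{1-1/N}$ and the Hardy--Littlewood inequality $\int_{\{u>\tau\}}f\,dx\le\int_{B_{\rho(\tau)}}f^{*}\,dx$, where $\omega_{N}\rho(\tau)^{N}=\mu(\tau)$, then combine to give
\begin{equation*}
N^{2}\omega_{N}^{2/N}\,\mu(\tau)^{2-2/N}\le\big(-\mu'(\tau)\big)\int_{B_{\rho(\tau)}}f^{*}\,dx .
\end{equation*}
Passing to the radial variable $r=\rho(\tau)$ (equivalently $\tau=U(r)$), a short computation with the powers of $\omega_{N}$ turns this into $-U'(r)\le -v'(r)$ for a.e.\ $r\in(0,R)$; integrating from $r$ to $R$ with $U(R)=v(R)=0$ gives $U(r)\le v(r)$, i.e.\ $u^{*}\le v$ a.e.

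For the rigidity statement I would argue as follows. Assume $u^{*}(x_{0})=v(x_{0})$ with $|x_{0}|=t\in(0,R)$, i.e.\ $U(t)=v(t)$. From $v(r)-U(r)=\int_{r}^{R}\big[(-v'(s))-(-U'(s))\big]\,ds$ with nonnegative integrand (by the previous step) and $v(t)-U(t)=0$, we get $-U'(r)=-v'(r)$ for a.e.\ $r\in(t,R)$, hence $U\equiv v$ on $[t,R]$; consequently equality holds for a.e.\ level $\tau\in(0,v(t))$ in the Cauchy--Schwarz step (so $|\nabla u|$ is constant on $\{u=\tau\}$), in the isoperimetric step (so $\{u>\tau\}$ is a ball), and in the Hardy--Littlewood step (so $f=f^{*}$ a.e.\ on that ball). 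Since these balls are nested, exhaust $\{u>0\}$ as $\tau\downarrow 0$, and carry a constant $|\nabla u|$ on each boundary sphere, one concludes that they are concentric with radii $\rho(\tau)$, so $u$ agrees with $u^{*}$, and $f$ with $f^{*}$, on the annulus $\{t\le|x|\le R\}$; since $u^{*}=v$ there, the claim follows.

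The step I expect to be the main obstacle is this rigidity analysis: one must justify the divergence/coarea computations on level sets for a merely $W^{2,2}$ solution (via Sard's theorem for the critical values of $u$), and — more delicately — upgrade the family of equality cases ``$\{u>\tau\}$ is a ball for a.e.\ $\tau\in(0,v(t))$'' to the pointwise conclusion that $u$ is radial about the origin with the right profile on the annulus, which requires identifying the centres and radii of those balls with the level sets of $v$. Alternatively, one may simply quote \cite[Theorem 2.4]{fs} (or the classical work of Talenti) for the full statement, equality case included.
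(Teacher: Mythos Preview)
The paper does not prove this lemma at all: it is stated with the citation ``(see \cite[Theorem 2.4]{fs})'' and used as a black box, so there is no in-paper argument to compare against. Your sketch is the standard Talenti comparison argument (coarea $+$ Cauchy--Schwarz $+$ isoperimetric $+$ Hardy--Littlewood, then integrate the resulting one-dimensional differential inequality), which is precisely the route taken in the cited source and in Talenti's original work; your final remark that one may simply quote \cite[Theorem 2.4]{fs} is exactly what the paper does.

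One small comment on the rigidity part of your sketch: you correctly isolate the delicate point, namely passing from ``$\{u>\tau\}$ is a ball for a.e.\ $\tau\in(0,v(t))$'' to the conclusion that these balls are centred at the origin. The argument that pins down the centre uses the constraint $\{u>\tau\}\subset B_R$ together with $|\{u>\tau\}|=\omega_N\rho(\tau)^N$ and $\rho(\tau)\to R$ as $\tau\downarrow 0$: a ball of radius $\rho$ contained in $B_R$ must have its centre within distance $R-\rho$ of the origin, and the nesting of the super-level sets then forces the common centre to be $0$. With that in hand, your identification $u=u^{*}=v$ and $f=f^{*}$ on the annulus $\{t\le|x|\le R\}$ goes through. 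This is the content of the equality case in \cite{fs}, so citing it is entirely adequate here.
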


Based on Lemma \ref{lemma6}, we  can obtain  the following result.
\begin{prop}\label{proposition1}
  $\phi_R \in {C}^2(B_R)\bigcap
H^1_0(B_R)$  is strictly positive, radially symmetric and decreasing.
\end{prop}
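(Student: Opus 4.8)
The plan is to establish the three assertions in sequence: regularity, then positivity, then radial symmetry and monotonicity. First I would address regularity. Since $\phi_R\in H^1_0(B_R)$ solves \eqref{model5}, I would set $V_R(x):=\int_{B_R}G(x,y)|\phi_R(y)|^2\,dy$ and note that $V_R$ solves $-\Delta V_R=|\phi_R|^2$ with zero boundary data. Using the Sobolev embedding $H^1_0(B_R)\hookrightarrow L^q(B_R)$ for appropriate $q$ together with Calderón--Zygmund estimates (or the explicit Green kernel bound) one gets that $V_R$ is bounded and in fact H\"older continuous on $B_R$; feeding $V_R\phi_R$ back into $-\Delta\phi_R+\phi_R=V_R\phi_R$ and bootstrapping with elliptic $L^p$ and Schauder theory yields $\phi_R\in C^2(B_R)$. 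On a bounded domain with $N\ge 3$ the nonlinearity is subcritical in the relevant sense, so no delicate tail analysis is needed here.

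Next, positivity. Replacing $\phi_R$ by $|\phi_R|$ does not change $\|\phi_R\|$ nor $\mathbb{D}(\phi_R)$ (the integrand involves only $|\phi_R|^2$), so by Lemma~\ref{lemma7}(ii) the point $t_{|\phi_R|}|\phi_R|$ lies on $\mathcal{N}_R$ with the same energy, hence $|\phi_R|$ is again a ground state; thus we may assume $\phi_R\ge 0$. Then $-\Delta\phi_R=(V_R-1)\phi_R\ge -\phi_R$, i.e.\ $-\Delta\phi_R+\phi_R\ge 0$ with $\phi_R\ge 0$, $\phi_R\not\equiv 0$, and the strong maximum principle gives $\phi_R>0$ in $B_R$.

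The main work is the radial symmetry and monotonicity, and this is where Talenti's inequality (Lemma~\ref{lemma6}) enters as the key tool. I would argue by a rearrangement/optimality contradiction. Let $u:=\phi_R$ and let $v\in H^1_0(B_R)$ solve $-\Delta v=(\phi_R^2)^*=(\phi_R^*)^2$ (the last equality because rearrangement commutes with squaring for nonnegative functions). By Talenti's inequality applied with $f=\phi_R^2$, the potential $V_R$ and the radially symmetric potential $\widetilde V:=v$ satisfy $V_R^*\le v$ a.e. Now consider the functional representation $I_R(t_u u)=\tfrac14\|u\|^4/\mathbb{D}(u)$ from Lemma~\ref{lemma7}(ii)--(iii): minimizing $I_R$ over $\mathcal N_R$ is equivalent to \emph{maximizing} $\mathbb{D}(u)/\|u\|^4$. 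Replacing $\phi_R$ by its symmetric decreasing rearrangement $\phi_R^*$ does not increase $\|\phi_R\|$ (P\'olya--Szeg\H{o}), while the quadratic-form term obeys, via Talenti,
\[
\mathbb{D}(\phi_R)=\int_{B_R}V_R\,\phi_R^2\,dx\le \int_{B_R}V_R^*\,(\phi_R^2)^*\,dx\le \int_{B_R} v\,(\phi_R^*)^2\,dx=\mathbb{D}(\phi_R^*),
\]
where the first inequality is the Hardy--Littlewood rearrangement inequality and the second is Talenti. Hence $\phi_R^*$ gives at least as large a value of $\mathbb{D}/\|\cdot\|^4$, so (after rescaling onto $\mathcal N_R$) it is also a ground state; but then every inequality above must be an equality. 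Equality in P\'olya--Szeg\H{o} together with the positivity just established forces $\phi_R$ to be radially symmetric and radially decreasing (up to translation, but the Dirichlet condition on the ball pins the center at the origin), and the equality case in Lemma~\ref{lemma6} upgrades this to strict radial monotonicity.

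The step I expect to be the main obstacle is the rigidity/equality analysis: one must carefully combine the equality case of Talenti's inequality (which gives $\phi_R^2=(\phi_R^2)^*$ on an annulus $t\le|x|\le R$ for the relevant level $t$) with the equality case of P\'olya--Szeg\H{o} (which is subtle when the function has flat parts or critical points of $|\nabla\phi_R|$) to conclude that $\phi_R=\phi_R^*$ everywhere and that the decrease is strict. Handling the possible flat regions — and ruling them out using the equation $-\Delta\phi_R=(V_R-1)\phi_R$ together with the already-known smoothness and strict positivity, plus a unique-continuation or Hopf-lemma argument at the boundary of any would-be flat set — is the delicate point, and I would treat it by showing that on any open set where $\phi_R$ is constant the equation would force $V_R$ constant there, contradicting $-\Delta V_R=\phi_R^2>0$.
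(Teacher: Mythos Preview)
Your overall strategy coincides with the paper's: regularity by bootstrap, positivity via $|\phi_R|$ and the strong maximum principle, and radial symmetry by comparing $\phi_R$ with $\phi_R^*$ through P\'olya--Szeg\H{o}, Hardy--Littlewood and Talenti, using the Nehari characterization $c_R=\tfrac14\|u\|^4/\mathbb D(u)$.

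The one place you diverge is the rigidity step, and here the paper's route is shorter and bypasses exactly the obstacle you flag. You plan to extract radial symmetry from the \emph{P\'olya--Szeg\H{o}} equality case and then upgrade to strict monotonicity via Talenti; you correctly note that the P\'olya--Szeg\H{o} equality case is delicate (flat sets, Brothers--Ziemer type arguments). The paper never touches that. Once one knows all inequalities are equalities, in particular
\[
\int_{B_R} v\,(\phi_R^*)^2\,dx=\int_{B_R} u^*\,(\phi_R^*)^2\,dx,
\]
where $u=V_R$ and $v$ solves $-\Delta v=(\phi_R^*)^2$. Since $v\ge u^*$ by Talenti and $\phi_R^*>0$ on $B_R$ (because $\phi_R>0$), this forces $v=u^*$ pointwise on $B_R$. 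Now invoke the \emph{equality clause} of Lemma~\ref{lemma6}: $u^*(x_0)=v(x_0)$ at every radius $t\in(0,R)$, hence $f=f^*$ on every annulus $\{t\le|x|\le R\}$, i.e.\ $|\phi_R|^2=(|\phi_R|^2)^*$ on all of $B_R$, and therefore $\phi_R=\phi_R^*$. No analysis of flat parts, no Brothers--Ziemer, no unique continuation is needed. So your proposed detour through P\'olya--Szeg\H{o} rigidity is correct in spirit but unnecessary; the $\mathbb D$-equality together with Talenti's equality case already delivers $\phi_R=\phi_R^*$ directly.
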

\begin{proof}
Since $|\nabla |\phi_R||^2=|\nabla (\phi_R^2)^{\frac{1}{2}}|^2$,    direct calculations imply that  $I_R(\phi_R)=I_R(|\phi_R|)$. This combined with Euler-Lagrange multiplication principle and the definition of ground state solution, yields that $|\phi_R|$ is also a ground state solution of \eqref{model5}. Furthermore, applying
 the standard  elliptic regularity estimates and the strong maximum principle, we deduce that   $\phi_R $ belongs to ${C}^2(B_R)$, and  either $\phi_R>$ or $\phi_R<0$  in $B_R.$
Without loss of generality, we can assume that $\phi_R>0$  in $B_R.$ Let $\phi_R^*$ be the symmetric decreasing rearrangement of $\phi_R.$

 Define
$
u(x)=\int_{B_R}G(x,y)|\phi_R|^2dy, \ v(x)=\int_{B_R}G(x,y)|\phi^*_R|^2dy.
$
According to Lemma \ref{lemma6}, we  deduce that $u^*\leq v$. Then, by using symmetric rearrangement
inequalities, we have
\begin{equation}\label{model38}
\int_{B_R}|\nabla \phi^*_R|^2dx\leq \int_{B_R}|\nabla \phi_R|^2dx,\ \int_{B_R}| \phi^*_R|^2dx=\int_{B_R}|\phi_R|^2dx
\end{equation} and
\begin{equation}\label{model17}
\begin{split}
\int_{B_R}\int_{B_R}G(x,y)|\phi^*_R(y)|^2|\phi^*_R(x)|^2dxdy&=\int_{B_R}v(x)|\phi^*_R(x)|^2dx\\
&\geq\int_{B_R}u^*(x)|\phi^*_R(x)|^2dx\\
&\geq\int_{B_R}u(x)|\phi_R(x)|^2dx\\
&=\int_{B_R}\int_{B_R}G(x,y)|\phi_R(y)|^2|\phi_R(x)|^2dxdy.
\end{split}
\end{equation}
This combined with \eqref{model17},  implies that
$I_R(\phi_R)\geq I_R(\phi_R^*)$.

In what follows,  we shall show  that
$\phi_R^*$ is a positive ground state solution of \eqref{model5}. In fact, let $F(u)=\langle I'_R(u),u\rangle$.  Then  we have $F(\phi_R^*)\leq F(\phi_R)=0$.  We claim that
$F(\phi_R^*)=0$, otherwise, $F(\phi_R^*)<0$.  This and  Lemma \ref{lemma7} show  that  there exists $t_{\phi_R^*}\in(0,1)$
 such that $t_{\phi_R^*}\phi_R^*\in \mathcal{N}_R,$  and
 $$c_R\leq I_R(t_{\phi_R^*}\phi_R^*)=\frac{1}{4}\frac{\|\phi_R^*\|^4}{\mathbb{D}(\phi_R^*)}
 <\frac{1}{4}\frac{\|\phi_R\|^4}{\mathbb{D}(\phi_R)}=I_R(\phi_R)=c_R.$$
This is a contradiction. Thus the claim holds, that is $\phi_R^*\in \mathcal{N}_R$. So $I_R(\phi_R)\leq I_R(\phi_R^*)$. By \eqref{model38}, we deduce that $I_R(\phi_R)=I_R(\phi_R^*)=c_R.$
This combined with \eqref{model17}, implies that
$$\int_{B_R}v(x)|\phi^*_R(x)|^2dx
=\int_{B_R}u^*(x)|\phi^*_R(x)|^2dx.$$
Since  $v\geq u^*$,  we conclude that $v=u^*$ on $B_R$. Thus, by  Lemma \ref{lemma6},  $u=v$ and then
$\phi_R=\phi_R^*$, which shows that  $\phi_R$  is strictly positive, radially symmetric decreasing. The proof is completed.
\end{proof}

Therefore, we immediately obtain the following symmetry result.
\begin{thm}\label{theorem1}
Assume that $N\geq3$. Then all the   positive  ground state solutions  of  \eqref{model5} are radially symmetric and decreasing.
\end{thm}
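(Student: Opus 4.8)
The plan is to read Theorem \ref{theorem1} off from Proposition \ref{proposition1}. Let $u\in H^1_0(B_R)$ be \emph{any} positive ground state solution of \eqref{model5}. Then $u\in\mathcal{N}_R$ with $I_R(u)=c_R$, so $u$ satisfies every hypothesis imposed on $\phi_R$ (namely, being a ground state solution of \eqref{model5}), and the argument of Proposition \ref{proposition1} applies word for word with $\phi_R$ replaced by $u$. Since $u$ is already assumed positive, the only portion of that proof that may be bypassed is the reduction $I_R(u)=I_R(|u|)$ together with the strong maximum principle used there to fix a definite sign; the elliptic regularity giving $u\in C^2(B_R)$ and all the rearrangement and rigidity steps are unaffected.

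For the record, the mechanism is as follows. Let $u^*$ be the symmetric decreasing rearrangement of $u$, and let $U,V\in H^1_0(B_R)$ solve $-\Delta U=|u|^2$ and $-\Delta V=(|u|^2)^*=|u^*|^2$ with zero boundary data. Talenti's inequality (Lemma \ref{lemma6}) gives $U^*\le V$ on $B_R$; combined with $\|\nabla u^*\|_{L^2(B_R)}\le\|\nabla u\|_{L^2(B_R)}$, $\|u^*\|_{L^2(B_R)}=\|u\|_{L^2(B_R)}$, and the Hardy--Littlewood inequality $\int_{B_R}U^*|u^*|^2\ge\int_{B_R}U|u|^2$, this yields $\mathbb{D}(u^*)\ge\mathbb{D}(u)$ and hence $I_R(u^*)\le I_R(u)=c_R$. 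Conversely, $\langle I_R'(u^*),u^*\rangle=\|u^*\|^2-\mathbb{D}(u^*)\le\|u\|^2-\mathbb{D}(u)=0$, so Lemma \ref{lemma7}(ii) produces $t_{u^*}\in(0,1]$ with $t_{u^*}u^*\in\mathcal{N}_R$ and $c_R\le I_R(t_{u^*}u^*)=\tfrac14\|u^*\|^4/\mathbb{D}(u^*)\le\tfrac14\|u\|^4/\mathbb{D}(u)=c_R$. Therefore every inequality above is an equality; in particular $\int_{B_R}V|u^*|^2=\int_{B_R}U^*|u^*|^2$ with $V\ge U^*$ forces $V=U^*$ on $B_R$ (note $u^*>0$ on $B_R$ since $|\{u>0\}|=|B_R|$), and then the rigidity clause of Lemma \ref{lemma6}, applied on annuli $\{t\le|x|\le R\}$ and letting $t\downarrow0$, gives $U=V$ and $|u|^2=(|u|^2)^*$ a.e.\ on $B_R$, i.e.\ $u=u^*$. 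Hence $u$ is radially symmetric and radially decreasing.

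The only step with genuine content — as opposed to bookkeeping — is this last rigidity passage: equality in Talenti's comparison throughout $B_R$ must be promoted to the pointwise identity $u=u^*$, and this is exactly what the second assertion of Lemma \ref{lemma6} supplies once the inner radius is sent to $0$. Without that equality-case information one could conclude only $I_R(u^*)=I_R(u)$, which does not by itself identify $u$ with its rearrangement. Since that rigidity is already in hand, no further obstacle arises and the theorem follows at once.
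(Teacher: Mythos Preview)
Your proposal is correct and follows exactly the paper's approach: the paper states Theorem~\ref{theorem1} immediately after Proposition~\ref{proposition1} with the one-line remark ``Therefore, we immediately obtain the following symmetry result,'' offering no separate proof. Your recap of the rearrangement and Talenti-rigidity mechanism faithfully reproduces the argument of Proposition~\ref{proposition1}, and your observation that the positivity hypothesis merely lets one skip the $I_R(u)=I_R(|u|)$ reduction is accurate.
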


\section{Uniqueness}
Based on    the radial symmetry of    the positive ground state solution   of  \eqref{model5} in  Section 3, we shall prove  its uniqueness in this section.  First, we shall develop the Newton's Theorem by listing  the following two lemmas whose proofs are similar as \cite[Theorem 9.7]{Lieb-Loss} with some necessary modifications.
\begin{lemma}\label{lemma1}
Let $N\geq3$ and $$J(r,x)=|\mathbb{S}^{N-1}|^{-1}\int_{\mathbb{S}^{N-1}}|rz-x|^{2-N}dz$$
with  $x\in \mathbb{R}^N$.  Then $J(r,x)=\min\{r^{2-N},|x|^{2-N}\}.$
\end{lemma}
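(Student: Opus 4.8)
The plan is to compute the spherical average $J(r,x) = |\mathbb{S}^{N-1}|^{-1}\int_{\mathbb{S}^{N-1}}|rz-x|^{2-N}\,dz$ directly, exploiting that the integrand, as a function of $z$, depends only on the angle between $z$ and $x$. First I would observe that $J(r,x)$ depends on $x$ only through $|x|$, by rotational invariance of the surface measure on $\mathbb{S}^{N-1}$; so write $\rho=|x|$ and reduce to the case $x=\rho e_1$. Then I would parametrize $\mathbb{S}^{N-1}$ by the polar angle $\theta\in[0,\pi]$ measured from $e_1$, so that $|rz-x|^2 = r^2 - 2r\rho\cos\theta + \rho^2$, and the surface-measure slice at angle $\theta$ has the factor $|\mathbb{S}^{N-2}|\sin^{N-2}\theta\,d\theta$. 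This turns $J$ into the one-dimensional integral
\begin{equation*}
J(r,x) = \frac{|\mathbb{S}^{N-2}|}{|\mathbb{S}^{N-1}|}\int_0^\pi \frac{\sin^{N-2}\theta}{(r^2 - 2r\rho\cos\theta + \rho^2)^{(N-2)/2}}\,d\theta.
\end{equation*}

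Next I would recognize that $J(r,x)$, viewed as a function of $x$ on the region $|x|\neq r$, is harmonic: it is the average over a sphere of the Newtonian-type kernel $y\mapsto |y-x|^{2-N}$, which is harmonic in $x$ away from the singularity $x=y$ (here $y=rz$ lies on the sphere of radius $r$), and one may differentiate under the integral sign. A radial harmonic function on $\{|x|<r\}$ must be constant (the $|x|^{2-N}$ branch blows up at the origin), and a radial harmonic function on $\{|x|>r\}$ that is bounded — indeed tends to $0$ — at infinity must be a multiple of $|x|^{2-N}$. So $J(r,x) = A$ for $|x|<r$ and $J(r,x) = B|x|^{2-N}$ for $|x|>r$, for constants $A,B$ depending only on $r$. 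To pin down $A$: evaluate at $x=0$, giving $A = |\mathbb{S}^{N-1}|^{-1}\int_{\mathbb{S}^{N-1}} r^{2-N}\,dz = r^{2-N}$. To pin down $B$: let $|x|\to\infty$ and note $|rz-x|^{2-N}/|x|^{2-N}\to 1$ uniformly in $z$, so $B=1$. Finally, continuity of $J$ in $x$ across $|x|=r$ (the integral is still finite there since $2-N > -(N-1)$, i.e. the singularity is integrable on the sphere) forces the two formulas to agree at $|x|=r$: indeed $r^{2-N} = 1\cdot r^{2-N}$, consistent. Hence $J(r,x) = \min\{r^{2-N}, |x|^{2-N}\}$ in all cases.

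The main obstacle is justifying the harmonicity step cleanly — specifically, that one may differentiate under the integral sign and that the averaged kernel is genuinely harmonic away from $|x|=r$, together with the boundedness/decay claim used to eliminate the unwanted solution of the radial Laplace equation. An alternative, more computational route avoiding harmonic-function theory is to evaluate the one-dimensional integral above directly via the substitution $u=\cos\theta$ and a standard hypergeometric or elementary reduction; for $N=3$ this is completely elementary, and for general $N$ one can use the classical expansion of $|rz-x|^{2-N}$ in Gegenbauer polynomials (the generating function for ultraspherical polynomials), whose spherical average kills all but the zeroth term, directly yielding $\min\{r,|x|\}^{2-N}$ times a constant that one checks equals $1$. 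I would present the harmonic-function argument as the main line since it is shortest, and remark that it parallels the proof of \cite[Theorem 9.7]{Lieb-Loss}.
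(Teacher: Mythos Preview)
Your proposal is correct and yields a clean proof. The route, however, differs from the paper's, though both exploit harmonicity of the Newtonian kernel. The paper rewrites $J(r,x)$ as the spherical mean of $g(y)=|y|^{2-N}$ over the sphere of radius $r$ centred at $-x$: when $r<|x|$ the origin lies outside this ball, so the mean value property of $g$ gives $J(r,x)=g(-x)=|x|^{2-N}$ at once; when $r>|x|$ the paper inserts an extra (trivial) average over $\xi\in\mathbb{S}^{N-1}$, replaces $x$ by $|x|\xi$ using radiality, swaps the order of integration by Fubini, and thereby reduces to the previous case. You instead regard $J$ as a harmonic function of $x$ on each of $\{|x|<r\}$ and $\{|x|>r\}$, classify radial harmonic functions there, and pin down the constants by evaluating at $x=0$ and by the asymptotics as $|x|\to\infty$. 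The paper's approach avoids the (routine) justification of differentiation under the integral sign, while yours avoids the Fubini/symmetry trick and treats the interior and exterior regions on an equal footing; both handle the borderline case $|x|=r$ by the same integrability/continuity argument.
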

\begin{proof}
First we see that $J(r,x)$ is radial with respect to $x$.  Define $g(x)=|x|^{2-N}$. Then $g$ is a  harmonic function if $x\neq 0.$ Note that
\begin{equation*}
J(r,x)=|\mathbb{S}_{r,-x}^{N-1}|^{-1}\int_{\mathbb{S}_{r,-x}^{N-1}}|y|^{2-N}dy
\end{equation*}
If  $r< |x|$, we see that $|y|^{2-N}$ is a harmonic function in $B_{r,-x}^{N-1}$, where $B_{r,-x}^{N-1}$ is  an open  ball domain with radius $r>0$ centered at the point $-x$.  Thus
\begin{equation}\label{model15}
J(r,x)=g(-x)=|x|^{2-N}.
\end{equation}
If $r>|x|$,  we have
 \begin{equation}\label{model16}
\begin{split}
J(r,x)&=|\mathbb{S}^{N-1}|^{-1}\int_{\mathbb{S}^{N-1}}\Big[|\mathbb{S}^{N-1}|^{-1}\int_{\mathbb{S}^{N-1}}|rz-x|^{2-N}dz\Big]d\xi\\
&=|\mathbb{S}^{N-1}|^{-1}\int_{\mathbb{S}^{N-1}}\Big[|\mathbb{S}^{N-1}|^{-1}\int_{\mathbb{S}^{N-1}}|rz-|x|\xi|^{2-N}dz\Big]d\xi\\
&=|\mathbb{S}^{N-1}|^{-1}\int_{\mathbb{S}^{N-1}}\Big[|\mathbb{S}^{N-1}|^{-1}\int_{\mathbb{S}^{N-1}}|rz-
|x|\xi|^{2-N}d\xi\Big]dz\\
&=|\mathbb{S}^{N-1}|^{-1}\int_{\mathbb{S}^{N-1}}r^{2-N}dz\\
&=r^{2-N}.
\end{split}
\end{equation}

It suffices to prove that when $r=|x|$, the conclusion holds.  Indeed, we claim that when $|x|=r,$
$$\lim\limits_{\tilde{\varepsilon}\to0}\int_{\mathbb{S}_{r,-x}^{N-1}\bigcap B_{\tilde{\varepsilon}}}|y|^{2-N}dy=0.$$
Here  $B_{\tilde{\varepsilon}}\subset \mathbb{R}^N$ is an open  ball domain with radius $\tilde{\varepsilon}>0$ centered at the origin.  Indeed, by direct calculations, we can deduce that
$$\mathbb{S}_{r,-x}^{N-1}\bigcap \partial B_{\varepsilon}=\big\{\frac{\varepsilon^2}{2r^2}x+x^\bot:x^\bot\in(\mbox{ span }\{x\})^\bot \mbox{ and }
|x^\bot|^2=\varepsilon^2-\frac{\varepsilon^4}{4r^2}\big\}, \ (0<\varepsilon<\frac{r}{2}),$$
where  $x^\bot$ is a  orthogonal vector of $x.$
Then\begin{equation*}
\begin{split}
\lim\limits_{\tilde{\varepsilon}\to0}\int_{\mathbb{S}_{r,-x}^{N-1}\bigcap B_{\tilde{\varepsilon}}}|y|^{2-N}dy=&
\lim\limits_{\tilde{\varepsilon}\to0}\int_0^{\tilde{\varepsilon}}\frac{1}{|\varepsilon|^{N-2}}|\mathbb{S}^{N-2}||\varepsilon^2-\frac{\varepsilon^4}{4r^2}|^{\frac{N-2}{2}}d\varepsilon\\
=&\lim\limits_{\tilde{\varepsilon}\to0}\int_0^{\tilde{\varepsilon}}|\mathbb{S}^{N-2}||\frac{4r^2-\varepsilon^2}{4r^2}|^{\frac{N-2}{2}}d\varepsilon\\
=&0.
\end{split}
\end{equation*}
So the claim holds and $J(r,x)$ is well defined in $x\in\mathbb{R}^N$. Furthermore, $J(r,x)$ is continuous with respect to $x.$
So  we obtain $J(r,x)=r^{2-N}.$ This together with   \eqref{model15} and  \eqref{model16}, yields the proof.
\end{proof}

\begin{lemma}\label{lemma8}
Assume that $\varphi\in H^1_0(B_R)$ is a positive radial function. Then for any $x\in B_R,$
\begin{equation}\label{model8}
\begin{array}{lll}
V_{\varphi}(x)&:=&\displaystyle\int_{B_R}\frac{|\varphi(y)|^2}{|y-x|^{N-2}}dy-\int_{B_R}\frac{|\varphi(y)|^2}{\Big|\frac{|y|}{R}x-\frac{R}{|y|}y\Big|^{N-2}}dy\\
&=&\displaystyle
\int_{|y|\leq|x|}(|x|^{2-N}-|y|^{2-N})|\varphi(y)|^2dy+\int_{B_R}
|y|^{2-N}|\varphi(y)|^2dy-R^{2-N}\int_{B_R}|\varphi(y)|^2dy.
\end{array}
\end{equation}
\end{lemma}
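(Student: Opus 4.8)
The plan is to reduce both integrals in the definition of $V_\varphi$ to one–dimensional radial integrals by passing to polar coordinates, and then to apply Lemma \ref{lemma1} twice. Since $\varphi$ is radial I would write $|\varphi(y)|^2=\rho(|y|)$; by the Sobolev embedding $|\varphi|^2\in L^{N/(N-2)}(B_R)$, and since both integrands are nonnegative, every interchange below is legitimate by Tonelli's theorem and the resulting expression is finite. Writing $y=r\omega$ with $0<r<R$ and $\omega\in\mathbb{S}^{N-1}$, the first integral becomes
\[
\int_{B_R}\frac{|\varphi(y)|^2}{|y-x|^{N-2}}\,dy=\int_0^R\rho(r)r^{N-1}\Big(\int_{\mathbb{S}^{N-1}}|r\omega-x|^{2-N}\,d\omega\Big)dr=|\mathbb{S}^{N-1}|\int_0^R\rho(r)r^{N-1}\min\{r^{2-N},|x|^{2-N}\}\,dr,
\]
by Lemma \ref{lemma1}. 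Splitting the last integral at $r=|x|$, writing $\int_0^R=\int_0^{|x|}+\int_{|x|}^R$, and using the elementary identity $|\mathbb{S}^{N-1}|\int_0^s\rho(r)r^{N-1}\,dr=\int_{|y|\le s}|\varphi(y)|^2\,dy$, I obtain
\[
\int_{B_R}\frac{|\varphi(y)|^2}{|y-x|^{N-2}}\,dy=\int_{|y|\le|x|}\big(|x|^{2-N}-|y|^{2-N}\big)|\varphi(y)|^2\,dy+\int_{B_R}|y|^{2-N}|\varphi(y)|^2\,dy .
\]

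For the second integral I would use the Kelvin–type factorization $\dfrac{|y|}{R}x-\dfrac{R}{|y|}y=\dfrac{|y|}{R}\big(x-\tfrac{R^2}{|y|^2}y\big)$; setting $\tilde y:=R^2y/|y|^2$, so that $|\tilde y|=R^2/|y|>R$ for $y\in B_R$, this yields
\[
\int_{B_R}\frac{|\varphi(y)|^2}{\big|\frac{|y|}{R}x-\frac{R}{|y|}y\big|^{N-2}}\,dy=R^{N-2}\int_0^R\frac{\rho(r)}{r^{N-2}}\,r^{N-1}\Big(\int_{\mathbb{S}^{N-1}}\big|x-\tfrac{R^2}{r}\omega\big|^{2-N}\,d\omega\Big)dr .
\]
By Lemma \ref{lemma1} the inner spherical average equals $|\mathbb{S}^{N-1}|\min\{(R^2/r)^{2-N},|x|^{2-N}\}$, and since $|x|<R\le R^2/r$ for all $0<r\le R$ while $t\mapsto t^{2-N}$ is decreasing (as $N\ge3$), this minimum is always $(R^2/r)^{2-N}=R^{4-2N}r^{N-2}$. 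Hence the second integral equals $R^{N-2}|\mathbb{S}^{N-1}|R^{4-2N}\int_0^R\rho(r)r^{N-1}\,dr=R^{2-N}\int_{B_R}|\varphi(y)|^2\,dy$. Subtracting this from the formula obtained for the first integral gives exactly \eqref{model8}.

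I expect the only genuinely delicate point to be the second integral: one must first spot the factorization $\frac{|y|}{R}x-\frac{R}{|y|}y=\frac{|y|}{R}(x-\tilde y)$ with $|\tilde y|=R^2/|y|$, and then check carefully that the relevant branch of the minimum in Lemma \ref{lemma1} is always $(R^2/r)^{2-N}$ — which is precisely where the hypothesis $x\in B_R$ (that is, $|x|<R$) is used. Everything else is a routine polar–coordinate computation together with the identity $|\mathbb{S}^{N-1}|\int_0^s\rho(r)r^{N-1}\,dr=\int_{|y|\le s}|\varphi|^2$, and the justification that $V_\varphi(x)$ is finite, which follows from the Sobolev embedding.
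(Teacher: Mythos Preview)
Your argument is correct and follows essentially the same route as the paper's proof: both reduce each of the two integrals to a spherical average of the Riesz kernel and invoke Lemma~\ref{lemma1} (Newton's theorem), then use the Kelvin-type factorization $\frac{|y|}{R}x-\frac{R}{|y|}y=\frac{|y|}{R}\big(x-\tfrac{R^2}{|y|^2}y\big)$ together with $|x|<R\le R^2/|y|$ to pick the correct branch of the minimum in the second term. The only cosmetic difference is that the paper exploits the radiality of $P(x)$ and $W(x)$ to average over the $x$–sphere, whereas you pass to polar coordinates in $y$; since Lemma~\ref{lemma1} is symmetric in the roles of the sphere radius and the fixed point, these two presentations are equivalent.
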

\begin{proof}
Let $$P(x):=\int_{B_R}\frac{|\varphi(y)|^2}{|y-x|^{N-2}}dy,\ \mbox{for all $x\in B_R$}.$$
 Then $p$ is a radial function, that is,  $P(x)=P(|x|z)$  for all  $z\in \mathbb{S}^{N-1}.$ By using Lemma \ref{lemma1},
we have
\begin{equation}\label{model10}
\begin{array}{lll}
P(x)&=&\displaystyle |\mathbb{S}^{N-1}|^{-1}\int_{\mathbb{S}^{N-1}}P(x)dz\\
&=&\displaystyle |\mathbb{S}^{N-1}|^{-1}\int_{\mathbb{S}^{N-1}}P(|x|z)dz\\
&=&\displaystyle |\mathbb{S}^{N-1}|^{-1}\int_{\mathbb{S}^{N-1}}\Big[\int_{B_R}\frac{|\varphi(y)|^2}{\Big|y-|x|z\Big|^{N-2}}dy\Big]dz\\
&=&\displaystyle\int_{B_R}\Big[ |\mathbb{S}^{N-1}|^{-1}\int_{\mathbb{S}^{N-1}}\Big||x|z-y\Big|^{2-N} dz\Big]|\varphi(y)|^2 dy\\
&=&\displaystyle|x|^{2-N}\int_{|y|\leq |x|}|\varphi(y)|^2dy+
\int_{|y|>|x|}|y|^{2-N}|\varphi(y)|^2dy\\
&=&\displaystyle\int_{|y|\leq|x|}(|x|^{2-N}-|y|^{2-N})|\varphi(y)|^2dy+\int_{B_R}
|y|^{2-N}|\varphi(y)|^2dy.
\end{array}
\end{equation}

Observe that
\begin{equation}
W(x):=\int_{B_R}\frac{|\varphi(y)|^2}{\Big|\frac{|y|}{R}x-\frac{R}{|y|}y\Big|^{N-2}}dy=
\int_{B_R}\frac{(\frac{R}{|y|})^{N-2}|\varphi(y)|^2}{\Big|x-\frac{R^2}{|y|^2}y\Big|^{N-2}}dy,\ \mbox{for all $x\in B_R$}.
\end{equation}
Clearly, $W(x)$ is radial in $B_R$ and $W(x)=W(|x|z)$ with $z\in \mathbb{S}^{N-1}$. Then
\begin{equation}\label{model9}
\begin{array}{lll}
\displaystyle W(x)&=&\displaystyle|\mathbb{S}^{N-1}|^{-1}\int_{\mathbb{S}^{N-1}} W(|x|z)dz\\
&=&\displaystyle|\mathbb{S}^{N-1}|^{-1}\int_{\mathbb{S}^{N-1}}\Big[\int_{B_R}\frac{(\frac{R}{|y|})^{N-2}|\varphi(y)|^2}
{\Big||x|z-\frac{R^2}{|y|^2}y\Big|^{N-2}}dy\Big]dz\\
&=&\displaystyle\int_{B_R}\Big[|\mathbb{S}^{N-1}|^{-1}\int_{\mathbb{S}^{N-1}}\frac{(\frac{R}{|y|})^{N-2}|\varphi(y)|^2}
{\Big||x|z-\frac{R^2}{|y|^2}y\Big|^{N-2}}dz\Big]dy.
\end{array}
\end{equation}
According to   Lemma \ref{lemma1} again, we may  conclude  that
\begin{equation*}
|\mathbb{S}^{N-1}|^{-1}\int_{\mathbb{S}^{N-1}}
{\Big||x|z-\frac{R^2}{|y|^2}y\Big|^{2-N}}dz=\left\{\begin{array}{lll}
(\frac{R^2}{|y|})^{2-N},\ \mbox{if $|y|\leq \frac{R^2}{|x|}$},\\
|x|^{2-N},\ \mbox{if $|y|> \frac{R^2}{|x|}$.}
\end{array}\right.
\end{equation*}
This combined with \eqref{model9}, yields that for any $x\in B_R$, there holds that $|y|\leq\frac{R^2}{|x|}.$ Hence,
\begin{equation}\label{model24}
\begin{array}{lll}
\displaystyle W(x)&=&\displaystyle\int_{|x|\leq \frac{R^2}{|y|^2}|y|}\Big(\frac{R^2}{|y|^2}|y|\Big)^{2-N}\Big(\frac{R}{|y|}\Big)^{N-2}|\varphi(y)|^2dy
+\int_{|x|>\frac{R^2}{|y|^2}|y|}|x|^{2-N}\Big(\frac{R}{|y|}\Big)^{N-2}|\varphi(y)|^2dy\\
&=&\displaystyle\int_{|x|\leq \frac{R^2}{|y|}}\Big(\frac{R^2}{|y|}\Big)^{2-N}\Big(\frac{R}{|y|}\Big)^{N-2}|\varphi(y)|^2dy
+\int_{|x|>\frac{R^2}{|y|}}|x|^{2-N}\Big(\frac{R}{|y|}\Big)^{N-2}|\varphi(y)|^2dy\\
&=&\displaystyle R^{2-N}\int_{B_R}|\varphi(y)|^2dy.
\end{array}
\end{equation}
Thus the conclusion follows from \eqref{model10} and \eqref{model24}.
\end{proof}

In the sequel, we always assume that $\varphi\in  {C}^2(B_R)\bigcap H^1_0(B_R)$ is a positive radial solution of  \eqref{model5}. Let
\begin{equation}\label{model39}
U_{\varphi}(x):=\int_{|y|\leq|x|}(|y|^{2-N}-|x|^{2-N})|\varphi(y)|^2dy,\ \mbox{for all $x\in B_R$}.
\end{equation}
By applying Lemma \ref{lemma8},  the nonlocal problem
\eqref{model5} becomes
\begin{equation}\label{model13}
(-\Delta+U_\varphi(x))\varphi(x)=\Big(\int_{B_R}|y|^{2-N}|\varphi(y)|^2dy-R^{2-N}\int_{B_R}|\varphi(y)|^2dy-1\Big)\varphi(x).
\end{equation}
We denote by
\begin{equation}\label{model12}
\begin{array}{lll}
\displaystyle\lambda(\varphi)&=&\displaystyle\int_{B_R}|y|^{2-N}|\varphi(y)|^2dy-R^{2-N}\int_{B_R}|\varphi(y)|^2dy-1\\
&=&\displaystyle\int_0^R |\mathbb{S}^{N-1}|s|\varphi(s)|^2ds-R^{2-N}\int_{B_R}|\varphi(y)|^2dy-1.
\end{array}
\end{equation}
Clearly,  $\lambda(\varphi)\in(0,\infty)$  follows from  \eqref{model39} and \eqref{model13}.
Furthermore, we have
\begin{lemma}\label{lemma10}
 $U_\varphi(x)$  is radially symmetric  and bounded.
\end{lemma}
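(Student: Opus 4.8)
The plan is to prove the two assertions separately; both follow easily once the radial structure of $\varphi$ is exploited, and the boundedness reduces to a one–variable estimate.

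First I would settle radial symmetry, which is essentially automatic. Since $\varphi$ is a positive radial function, $|\varphi(y)|^2$ depends on $y$ only through $|y|$; moreover the region of integration $\{y:|y|\le|x|\}$ in \eqref{model39} and the weight $|y|^{2-N}-|x|^{2-N}$ depend only on the radii $|x|$ and $|y|$. Hence $U_\varphi(x)$ depends on $x$ only through $|x|$, i.e.\ $U_\varphi$ is radially symmetric. Passing to polar coordinates and writing $r=|x|$, I would record the convenient form
$$U_\varphi(x)=|\mathbb{S}^{N-1}|\int_0^r\big(s-r^{2-N}s^{N-1}\big)|\varphi(s)|^2\,ds,$$
which I then use for the size estimate.

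For boundedness, the key observation is that, because $N\ge 3$, the map $t\mapsto t^{2-N}$ is strictly decreasing on $(0,\infty)$. Therefore, on the set $\{|y|\le|x|\}$ one has $0\le |y|^{2-N}-|x|^{2-N}\le |y|^{2-N}$, so
$$0\le U_\varphi(x)\le\int_{|y|\le|x|}|y|^{2-N}|\varphi(y)|^2\,dy\le\int_{B_R}|y|^{2-N}|\varphi(y)|^2\,dy,$$
and the right-hand side is a constant independent of $x$. It remains finite: in polar coordinates it equals $|\mathbb{S}^{N-1}|\int_0^R s|\varphi(s)|^2\,ds$, so the only possible issue is integrability of the singular weight $|y|^{2-N}$ against $|\varphi|^2$ near the origin, which is immediate since $\varphi\in C^2(B_R)\cap H^1_0(B_R)$ is a solution of \eqref{model5} and hence bounded (elliptic regularity); equivalently, this finiteness is already built into \eqref{model12}, where $\lambda(\varphi)\in(0,\infty)$ presupposes $\int_0^R|\mathbb{S}^{N-1}|s|\varphi(s)|^2\,ds<\infty$. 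Thus $\|U_\varphi\|_{L^\infty(B_R)}\le\int_{B_R}|y|^{2-N}|\varphi(y)|^2\,dy<\infty$.

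I do not expect a genuine obstacle here: the only step worth a sentence of justification is the finiteness of $\int_{B_R}|y|^{2-N}|\varphi(y)|^2\,dy$, and that follows at once from boundedness of $\varphi$ near the origin. Everything else is a direct consequence of the monotonicity of $t\mapsto t^{2-N}$ together with the radial symmetry inherited from $\varphi$.
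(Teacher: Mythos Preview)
Your proof is correct and follows essentially the same route as the paper: both exploit the radial symmetry of $\varphi$ to pass to polar coordinates and obtain the one-variable formula $U_\varphi(x)=|\mathbb{S}^{N-1}|\int_0^r s^{N-1}(s^{2-N}-r^{2-N})|\varphi(s)|^2\,ds$, from which radial symmetry and boundedness (via $s^{2-N}-r^{2-N}\le s^{2-N}$ and the boundedness of $\varphi$) are immediate. The paper's proof is just more terse---it states the polar form and leaves the boundedness step implicit, whereas you spell it out.
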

\begin{proof}
By \eqref{model39} and the radial symmetry of  $\varphi$, we have
$$U_{\varphi}(x)=\int_0^r |\mathbb{S}^{N-1}|s^{N-1}(s^{2-N}-r^{2-N})|\varphi(s)|^2ds.$$
This yields the proof.
\end{proof}

 Let $R^*:=\sqrt{\lambda (\varphi)}R.$ The following lemma   illustrates that every radial  solution of \eqref{model13} can obey  the    following  canonical form
\begin{equation}\label{model14}
(-\Delta +U_\phi( x))\phi(x)=\phi(x), \  \phi\in H_0^1(B_{R^*}),
\end{equation} after a suitable scaling.  This  is critical  in the proof of  the  uniqueness of   positive radial solutions of \eqref{model5}.
\begin{lemma}\label{lemma2}
Let $\varphi_{\lambda}(x)=\frac{1}{\lambda}\varphi(\frac{x}{\sqrt{\lambda}})$  with $\lambda=\lambda(\varphi)$. Then $\varphi_{\lambda}(x)$  satisfies
\eqref{model14}.
\end{lemma}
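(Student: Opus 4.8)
The plan is a direct scaling argument based on the explicit formula \eqref{model39} for $U_\varphi$ and the defining identity \eqref{model12} for $\lambda:=\lambda(\varphi)$. Since $\lambda\in(0,\infty)$, the function $\varphi_{\lambda}(x)=\frac{1}{\lambda}\varphi\big(\frac{x}{\sqrt{\lambda}}\big)$ is defined exactly for $|x|<\sqrt{\lambda}R=R^{*}$, and it inherits from $\varphi$ the regularity $\varphi_{\lambda}\in C^{2}(B_{R^{*}})\cap H^{1}_{0}(B_{R^{*}})$ as well as positivity and radial symmetry; this pins down the domain appearing in \eqref{model14}. It therefore suffices to verify the pointwise identity $(-\Delta+U_{\varphi_{\lambda}}(x))\varphi_{\lambda}(x)=\varphi_{\lambda}(x)$ on $B_{R^{*}}$.

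To this end I would first record two elementary scaling relations. The chain rule gives $-\Delta\varphi_{\lambda}(x)=-\lambda^{-2}(\Delta\varphi)\big(\frac{x}{\sqrt{\lambda}}\big)$, each of the two differentiations producing a factor $\lambda^{-1/2}$. Second, and this is the only step that needs some care, one has the homogeneity relation $U_{\varphi_{\lambda}}(x)=\lambda^{-1}U_{\varphi}\big(\frac{x}{\sqrt{\lambda}}\big)$: starting from \eqref{model39}, substitute $y=\sqrt{\lambda}\,\eta$, so that $|\varphi_{\lambda}(y)|^{2}=\lambda^{-2}|\varphi(\eta)|^{2}$, $dy=\lambda^{N/2}d\eta$, $|y|^{2-N}=\lambda^{(2-N)/2}|\eta|^{2-N}$, and the constraint $|y|\le|x|$ becomes $|\eta|\le|x|/\sqrt{\lambda}$; the three scaling factors then combine as $\lambda^{(2-N)/2}\cdot\lambda^{-2}\cdot\lambda^{N/2}=\lambda^{-1}$, which is precisely the claimed identity. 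The mechanism is that the kernel $|y|^{2-N}$ building $U_{\varphi}$ carries the same degree of homogeneity $2-N$ as the Newton kernel, which is exactly what forces the rescaling $x\mapsto x/\sqrt{\lambda}$ together with the amplitude $\lambda^{-1}$; that $U_{\varphi_{\lambda}}$ is again radial and bounded is guaranteed by Lemma \ref{lemma10}.

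Finally I would combine these. Equation \eqref{model13} reads $(-\Delta+U_{\varphi})\varphi=\lambda(\varphi)\varphi=\lambda\varphi$ by the definition \eqref{model12}; evaluating it at the point $\frac{x}{\sqrt{\lambda}}$ and multiplying through by $\lambda^{-2}$ gives
\[
-\lambda^{-2}(\Delta\varphi)\Big(\tfrac{x}{\sqrt{\lambda}}\Big)+\lambda^{-2}U_{\varphi}\Big(\tfrac{x}{\sqrt{\lambda}}\Big)\varphi\Big(\tfrac{x}{\sqrt{\lambda}}\Big)=\lambda^{-1}\varphi\Big(\tfrac{x}{\sqrt{\lambda}}\Big).
\]
By the two relations above, the left-hand side equals $(-\Delta+U_{\varphi_{\lambda}}(x))\varphi_{\lambda}(x)$ and the right-hand side equals $\varphi_{\lambda}(x)$, which is exactly \eqref{model14} with $\phi=\varphi_{\lambda}$. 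No substantive obstacle arises here; the only thing to keep track of is the bookkeeping of homogeneity exponents in the change of variables, together with the observation that it is precisely the two constant terms $\int_{B_{R}}|y|^{2-N}|\varphi(y)|^{2}dy$ and $R^{2-N}\int_{B_{R}}|\varphi(y)|^{2}dy$ absorbed into $\lambda(\varphi)$ that make the coefficient on the right collapse to $1$ after rescaling.
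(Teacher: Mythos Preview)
Your proof is correct and follows essentially the same scaling computation as the paper. The paper substitutes $\varphi(x)=\lambda\varphi_{\lambda}(\sqrt{\lambda}x)$ directly into \eqref{model13} and collects powers of $\lambda$, whereas you first isolate the two scaling identities $-\Delta\varphi_{\lambda}(x)=\lambda^{-2}(-\Delta\varphi)(x/\sqrt{\lambda})$ and $U_{\varphi_{\lambda}}(x)=\lambda^{-1}U_{\varphi}(x/\sqrt{\lambda})$ and then combine them; the underlying calculation is the same.
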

\begin{proof}
Notice that
$\varphi(x)=\lambda\varphi_{\lambda}(\sqrt{\lambda}x)$.  Then
$$-\lambda^2\Delta \varphi_{\lambda}(z)+\Big(\int_{|z_1|\leq|z|}\lambda^{3}\lambda^{\frac{N-2}{2}}(|z_1|^{2-N}-|z|^{2-N})|\varphi_{\lambda}(z_1)|^2\lambda^{\frac{-N}{2}}dz_1\Big)
\varphi_{\lambda}(z)=\lambda^2\varphi_{\lambda}(z),$$
where $z=\sqrt{\lambda}x.$
Therefore,
$$(-\Delta+U_{\varphi_\lambda}(z))
\varphi_\lambda(z)=\varphi_\lambda(z).$$
This completes our proof.
\end{proof}

In the following part,  we shall first   investigate  the uniqueness
of positive radial solutions of \eqref{model14}, which yields that the uniqueness
of positive radial solutions of \eqref{model5} due to Lemma  \ref{lemma2}.  Assume that $\phi\in  {C}^2(B_{R^*})\bigcap H^1_0(B_{R^*})$ is a positive radial solution of  \eqref{model14}.

Define  a new functional $A_\phi:H^1_0(B_{R^*})\to \mathbb{R}$ by
\begin{equation}
A_\phi(\psi)=\int_{B_{R^*}}|\nabla\psi(x)|^2dx+\int_{B_{R^*}}U_\phi(x)|\psi(x)|^2dx.
\end{equation}
We shall consider the following minimizing problem
$$\Gamma_\phi=\inf\{A_\phi(\psi)|\psi\in H_0^1(B_{R^*}),\ \|\psi\|_{L^2(B_{R^*})}=1\}.$$
Then we have the following lemma.
\begin{lemma}\label{lemma5}
$\Gamma_\phi$ can be achieved by    a nonnegative  radial function  $\hat{\psi}\in H^1_0(B_{R^*})$   and $\Gamma_\phi=1$.
\end{lemma}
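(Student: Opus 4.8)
The plan is to show two things: that the infimum $\Gamma_\phi$ is attained by a nonnegative radial function, and that its value is exactly $1$. The value claim is the conceptually important half, and it follows from the fact that $\phi$ itself, being a positive radial solution of \eqref{model14}, is essentially an eigenfunction for the Schr\"odinger operator $-\Delta + U_\phi(x)$ with eigenvalue $1$, and a \emph{positive} eigenfunction must be a ground state. So I would organize the argument around the linear self-adjoint operator $L_\phi := -\Delta + U_\phi(x)$ on $L^2(B_{R^*})$ with form domain $H^1_0(B_{R^*})$; note that by Lemma \ref{lemma10}, $U_\phi$ is bounded and nonnegative, so $A_\phi$ is a well-defined, coercive, equivalent quadratic form on $H^1_0(B_{R^*})$, and $\Gamma_\phi \ge 0$ is finite.

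First I would establish attainment. Take a minimizing sequence $\psi_n$ with $\|\psi_n\|_{L^2(B_{R^*})}=1$ and $A_\phi(\psi_n)\to \Gamma_\phi$. Since $0\le U_\phi \le C$, the sequence is bounded in $H^1_0(B_{R^*})$, so up to a subsequence $\psi_n \rightharpoonup \hat\psi$ weakly in $H^1_0(B_{R^*})$ and strongly in $L^2(B_{R^*})$ by the compact Sobolev embedding on the bounded domain $B_{R^*}$; hence $\|\hat\psi\|_{L^2(B_{R^*})}=1$. By weak lower semicontinuity of the $H^1$-seminorm and strong $L^2$-convergence (controlling the $\int U_\phi|\psi_n|^2$ term since $U_\phi\in L^\infty$), we get $A_\phi(\hat\psi)\le \liminf A_\phi(\psi_n)=\Gamma_\phi$, so $\hat\psi$ is a minimizer. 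Replacing $\hat\psi$ by $|\hat\psi|$ does not change $A_\phi$ (since $|\nabla|\hat\psi||=|\nabla\hat\psi|$ a.e.) nor the $L^2$-norm, so we may take $\hat\psi\ge 0$. For radiality, I would apply the symmetric decreasing rearrangement: by the P\'olya--Szeg\H{o} inequality $\int|\nabla\hat\psi^*|^2\le\int|\nabla\hat\psi|^2$, by equimeasurability $\|\hat\psi^*\|_{L^2}=1$, and since $U_\phi$ is radially symmetric and (by the formula in the proof of Lemma \ref{lemma10}) nondecreasing in $|x|$ while $|\hat\psi|^2$ rearranges to a decreasing function, Hardy--Littlewood gives $\int U_\phi|\hat\psi^*|^2\le\int U_\phi|\hat\psi|^2$; hence $\hat\psi^*$ is also a minimizer, and it is nonnegative and radial. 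Rename it $\hat\psi$.

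Next, the value. On one hand, testing the definition of $\Gamma_\phi$ with $\phi/\|\phi\|_{L^2(B_{R^*})}$ and using that $\phi$ solves \eqref{model14} (multiply by $\phi$ and integrate by parts, legitimate since $\phi\in C^2(B_{R^*})\cap H^1_0(B_{R^*})$) gives $A_\phi(\phi)=\int_{B_{R^*}}(|\nabla\phi|^2+U_\phi|\phi|^2)=\int_{B_{R^*}}|\phi|^2$, hence $\Gamma_\phi\le 1$. On the other hand, the minimizer $\hat\psi$ satisfies, by the Lagrange multiplier / Euler--Lagrange equation, $-\Delta\hat\psi + U_\phi\hat\psi = \Gamma_\phi\hat\psi$ weakly; thus $\Gamma_\phi$ is the bottom eigenvalue of $L_\phi$ and $\hat\psi\ge0$ is an associated eigenfunction. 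By elliptic regularity and the strong maximum principle, $\hat\psi>0$ in $B_{R^*}$. Now $\phi>0$ is an eigenfunction of the same self-adjoint operator $L_\phi$ with eigenvalue $1$; since two positive $L^2$ functions cannot be $L^2$-orthogonal, $\phi$ cannot be orthogonal to the ground state $\hat\psi$, which forces the eigenvalue $1$ to coincide with the bottom eigenvalue $\Gamma_\phi$. Therefore $\Gamma_\phi=1$, completing the proof.

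The main obstacle is the lower bound $\Gamma_\phi\ge 1$: it is not enough to know $\phi$ is some eigenfunction, one must argue that a strictly positive eigenfunction is necessarily the \emph{ground state}. The clean way is the orthogonality argument above (positive functions have nonzero pairing), which relies on self-adjointness of $L_\phi$ and on knowing the genuine minimizer $\hat\psi$ is strictly positive via the maximum principle; an alternative is to invoke the Perron--Frobenius/Krein--Rutman property of $e^{-tL_\phi}$ to get simplicity of the bottom eigenvalue directly. Everything else — attainment via compact embedding, passing to $|\hat\psi|$, and the rearrangement step using monotonicity of $U_\phi$ — is routine.
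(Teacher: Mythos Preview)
Your proof is correct and follows essentially the same route as the paper: attainment via a minimizing sequence plus compact Sobolev embedding and weak lower semicontinuity, radiality via symmetric decreasing rearrangement exploiting that $U_\phi$ is radially nondecreasing, and $\Gamma_\phi=1$ via the cross-testing of the two eigenvalue equations against $\phi$ and $\hat\psi$. The only cosmetic difference is that the paper obtains $\Gamma_\phi=1$ in one stroke (multiply \eqref{model14} by $\hat\psi$ and \eqref{model19} by $\phi$, subtract, and use $\int\phi\hat\psi>0$), so your detour through elliptic regularity and the strong maximum principle for $\hat\psi$ is unnecessary --- $\hat\psi\ge 0$, $\hat\psi\not\equiv 0$, and $\phi>0$ already force $\int\phi\hat\psi>0$.
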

\begin{proof}
It is easy to see that  $\Gamma_\phi\geq0$. For any fixed  $\psi\in H_0^1(B_{R^*})$, we
denote the  symmetric decreasing rearrangement of
$\psi$ by $\psi^*$.  In view of Lemma \ref{lemma10}. we assume  that   $U_\phi^\infty:=\lim\limits_{|x|\to R}U_\phi(x)$. Then by  using symmetric  rearrangement inequalities(see
\cite{LEH}),  there holds that $\|\nabla \psi\|_{L^2(B_{R^*})}\geq\|\nabla \psi^*\|_{L^2(B_{R^*})},$ $\| \psi\|_{L^2(B_{R^*})}=\|\psi^*\|_{L^2(B_{R^*})}$
and
\begin{equation}
\begin{array}{lll}
\displaystyle\int_{B_{R^*}}U_\phi(x)|\psi(x)|^2dx&=&\displaystyle\int_{B_{R^*}}U_\phi^\infty|\psi(x)|^2dx
-\int_{B_{R^*}}(U_\phi^\infty-U_\phi(x))|\psi(x)|^2dx\\
&\geq&\displaystyle\int_{B_{R^*}}U_\phi(x)|{\psi^*}(x)|^2dx.\\
\end{array}
\end{equation}

From the above, we can assume that  there exists a sequence of nonnegative radially symmetric   decreasing functions  $\{\psi_n\}_{n\geq1}$
satisfying $A_{\phi}(\psi_n)\rightarrow \Gamma_\phi$ and $\|\psi_n\|_{L^2(B_{R^*})}=1$. Since
$\|\nabla \psi_n\|_{L^2(B_{R^*})}$ is bounded, up to a subsequence,  there exists a nonnegative radial symmetric decreasing function $\hat{\psi}\in H^1_0(B_{R^*})$ such that   $\psi_n\rightharpoonup \hat{\psi}$ weakly in
$H_0^1(B_{R^*})$  and $\psi_n\rightarrow \hat{\psi}$ strongly in
$L^2(B_{R^*})$.   Thus $\|\hat{\psi}\|_{L^2(B_{R^*})}=1$. Moreover, by  weak lower semicontinuity of the norm and Fatou's Lemma , we
have $\liminf\limits_{n\rightarrow \infty}\|\nabla \psi_n\|_2\geq\|\nabla \hat{\psi}\|_2$ and
$$\liminf\limits_{n\rightarrow \infty}\int_{B_{R^*}}U_\phi(x)|\psi_n(x)|^2dx\geq\int_{B_{R^*}}U_\phi(x)|{\hat{\psi}}(x)|^2dx,$$
which  shows that $A_\phi(\hat{\psi})\leq \Gamma_\phi$. This  combined with the fact that $\|\hat{\psi}\|_{L^2(B_{R^*})}=1$, implies that $\Gamma_\phi$ can be achieved by $\hat{\psi}$.

 Next, it suffices to
prove  $\Gamma_\phi=1$.
By using  Lagrange multiplier principle, there exists a real number $\theta$  such that
\begin{equation}\label{model19}
(-\Delta+U_{\phi}(x))\hat{\psi}=\theta\hat{\psi}.
\end{equation}
It is easy to check that $\theta=\Gamma_\phi$.
Multiplying \eqref{model14}and \eqref{model19} by $\hat{\psi}$ and $\phi $, respectively, and integrating by part,  we deduce that  $\Gamma_\phi=1$. This completes our proof.
\end{proof}

In similar spirit of \cite[Lemma 3.3]{wy},  we obtain the  following lemma.
\begin{lemma}\label{lemma9}
The positive  radial solution of
\eqref{model14} is unique.
\end{lemma}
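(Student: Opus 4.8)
The plan is to argue by contradiction, using that — by Lemma \ref{lemma5} — every positive radial solution $\phi$ of \eqref{model14} is precisely the (simple) first eigenfunction of the Schr\"odinger operator $L_\phi:=-\Delta+U_\phi(x)$ on $B_{R^*}$, with first eigenvalue $\Gamma_\phi=1$. Suppose $\phi_1,\phi_2\in C^2(B_{R^*})\cap H^1_0(B_{R^*})$ are two positive radial solutions and set $W:=U_{\phi_1}-U_{\phi_2}$, a radial function on $B_{R^*}$. Two integral identities are immediate. First, testing the equation for $\phi_1$ against $\phi_2$ and the equation for $\phi_2$ against $\phi_1$ and subtracting (self–adjointness of $-\Delta$) gives $\int_{B_{R^*}}W\,\phi_1\phi_2\,dx=0$. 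Second, using $\phi_j$ ($j\neq i$) as a competitor in the variational problem defining $\Gamma_{\phi_i}=1$, together with the equation satisfied by $\phi_j$, yields $\int_{B_{R^*}}W\,|\phi_2|^2\,dx\ge 0\ge\int_{B_{R^*}}W\,|\phi_1|^2\,dx$; moreover, since the first eigenvalue of $L_{\phi_i}$ is simple, these inequalities are strict unless $\phi_1$ and $\phi_2$ are proportional. If $\phi_1=c\phi_2$ with $c>0$, substituting into \eqref{model14} and using $U_{c\phi_2}=c^2U_{\phi_2}$ forces $c=1$; so we may assume $\phi_1,\phi_2$ are not proportional and all three relations hold with the two inequalities strict.

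Next I would reduce to a one–dimensional problem via Newton's Theorem (Lemma \ref{lemma8}) and the radial ODE. Writing everything in $r=|x|$, \eqref{model39} gives the explicit Volterra expression $W(r)=|\mathbb{S}^{N-1}|\int_0^r\big(s-s^{N-1}r^{2-N}\big)\big(|\phi_1(s)|^2-|\phi_2(s)|^2\big)\,ds$, whose kernel is nonnegative for $0\le s\le r$, so $W(0)=0$ and $W'(r)=(N-2)\,r^{1-N}\big(\|\phi_1\|_{L^2(B_r)}^2-\|\phi_2\|_{L^2(B_r)}^2\big)$. Combining the two radial equations in Wronskian form gives $\frac{d}{dr}\big[r^{N-1}\phi_2^2\,(\phi_1/\phi_2)'\big]=-r^{N-1}W\phi_1\phi_2$, hence $r^{N-1}\phi_2^2\,(\phi_1/\phi_2)'(r)=-\int_0^r t^{N-1}W\phi_1\phi_2\,dt$: the monotonicity of the ratio $\phi_1/\phi_2$ is governed by the sign of the running integral of $W\phi_1\phi_2$. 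Since $\phi_1,\phi_2$ are positive and radially decreasing (by Lemma \ref{lemma5}, each being the positive, hence first, eigenfunction of $L_{\phi_i}$), one may normalise so that $\phi_1(0)\ge\phi_2(0)$; then $W>0$ near $0^+$, and $W$ stays positive as long as the mass inequality $\|\phi_1\|_{L^2(B_r)}\ge\|\phi_2\|_{L^2(B_r)}$ persists. If that inequality persists up to $R^*$, then $W\ge 0$ on $(0,R^*]$, contradicting $\int W\phi_1\phi_2=0$; in the remaining case one tracks the single crossing of the masses together with the crossing of $|\phi_1|^2-|\phi_2|^2$ (which are controlled by the radial monotonicity), and uses the strict inequalities $\int W|\phi_1|^2<0<\int W|\phi_2|^2$ together with a Sturm/Wronskian comparison on $\phi_1/\phi_2$ to rule out any sign change of $W$, forcing $\phi_1\equiv\phi_2$. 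In the borderline case $\phi_1(0)=\phi_2(0)$ one instead observes that \eqref{model14}, written radially, is a Cauchy problem of Volterra type — $U_\phi(r)$ depends only on $\phi|_{[0,r]}$ (Lemma \ref{lemma10}) — so a contraction argument on $C([0,\delta])$ gives local uniqueness of the profile with prescribed value at the origin, which then propagates.

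The main obstacle is exactly the sign analysis in the last step: the integral identities only \emph{localise} the discrepancy (they are individually compatible with $W$ changing sign once, with $\|\phi_1\|_{L^2(B_{R^*})}<\|\phi_2\|_{L^2(B_{R^*})}$), so closing the argument genuinely requires the pointwise radial structure — the nonnegativity of the Newton kernel $s-s^{N-1}r^{2-N}$, the monotonicity of $r\mapsto U_\phi(r)$, and a comparison argument showing that $\phi_1/\phi_2$ cannot oscillate — exactly as in \cite[Lemma 3.3]{wy}. Once $W\equiv 0$ is established, $\phi_1$ and $\phi_2$ solve the same linear eigenvalue problem $(-\Delta+U)\psi=\psi$ as positive (hence first, hence simple) eigenfunctions, so they coincide up to a positive multiple, and the scaling identity $U_{c\phi}=c^2U_\phi$ forces that multiple to be $1$, completing the proof.
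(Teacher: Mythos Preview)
Your outline is close to a complete argument, but two points need attention.

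First, a sign: the Wronskian identity should read
\[
\frac{d}{dr}\Big[r^{N-1}\phi_2^2\,\big(\phi_1/\phi_2\big)'\Big]=+\,r^{N-1}W\,\phi_1\phi_2,
\]
not $-r^{N-1}W\phi_1\phi_2$ (multiply $(r^{N-1}\phi_i')'=r^{N-1}(U_{\phi_i}-1)\phi_i$ by $\phi_j$ and subtract; this is exactly the computation leading to \eqref{model36} later in the paper). With the corrected sign the ``remaining case'' you worry about is in fact vacuous. If $\phi_1(0)>\phi_2(0)$ then, as long as $\phi_1>\phi_2$ on $[0,r)$, the nonnegativity of the Newton kernel gives $W>0$ on $(0,r]$, hence $\int_0^r t^{N-1}W\phi_1\phi_2\,dt>0$, hence $(\phi_1/\phi_2)'(r)>0$; since $\phi_1(0)/\phi_2(0)>1$, this bootstrap forces $\phi_1/\phi_2>1$ on all of $[0,R^*)$. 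Thus $W>0$ on $(0,R^*)$, contradicting $\int_{B_{R^*}}W\phi_1\phi_2\,dx=0$. No mass crossing can occur, and the extra Sturm analysis you defer to \cite{wy} is not needed. The borderline case $\phi_1(0)=\phi_2(0)$ is then settled by your Volterra/contraction argument, and the proof closes. (So the ``main obstacle'' you flag is an artifact of the sign error, not a genuine gap.)

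For comparison, the paper argues by a trichotomy on the sign of $\psi=\phi_1-\phi_2$: either $\psi$ keeps a sign on $[0,R^*)$ (ruled out via the variational inequality $A_{\phi_2}(\phi_1)\ge\|\phi_1\|_{L^2}^2$, essentially your first step), or $\psi$ keeps a sign on some $[0,R_1]$ with $\psi(R_1)=0$ (ruled out by testing the averaged equation $\big[-\Delta+\tfrac12(U_{\phi_1}+U_{\phi_2})\big]\psi=\psi-\tfrac12 W(\phi_1+\phi_2)$ against the truncation $\tilde\psi$ equal to $\psi$ on $B_{R_1}$ and zero outside), or $\psi\equiv0$ on an initial interval and then oscillates (handled by the same contraction argument you give). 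Your ratio/bootstrap route shows directly that the second case cannot arise once $\phi_1(0)\neq\phi_2(0)$, so --- with the sign corrected --- your approach is a mild streamlining of the paper's three-case scheme.
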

\begin{proof}
We shall argue it by contradiction. Let  $\phi_1$ and $\phi_2\in {C}^2(B_{R^*})\bigcap
H^1_0(B_{R^*})$ be two different positive radial solutions of \eqref{model14}.  Then   $A_{\phi_i}(\phi_i)=\|\phi_i\|^2_2$, and
$\phi_i$ satisfies the following ordinary differential
equation with second order
\begin{equation}\label{model27}
\phi^{''}(r)+\frac{N-1}{r}\phi^{'}(r)=(U_\phi(r)-1)\phi(r),
\end{equation}
 where $i=1,2$.
Set $\psi:=\phi_1-\phi_2\nequiv0$. Then $\psi$ is also a radial function. There are  three
cases to occur.

\textbf{Case 1}. Either $\psi(r)\geq0$ for all $r\in[0, R^*)$ or
$\psi(r)\leq0$ for all $r\in[0,R^*)$.

Without loss of generality, we assume  $\psi(r)\geq0$ for all
$r\in[0,R^*)$. For any $\varphi\in
H^1_0(B_{R^*})$,  there holds
\begin{equation}\label{model26}
A_\phi(\varphi)\geq\|\varphi\|_{L^2(B_{R^*})}^2
\end{equation} due to  Lemma~\ref{lemma5}. This combine with Lemma \ref{lemma10}, yields that
$$\|\phi_1\|_{L^2(B_{R^*})}^2\leq A_{\phi_2}(\phi_1)=A_{\phi_1}(\phi_1)+\int_{B_R}(U_{\phi_2}-U_{\phi_1})|\phi_1(x)|^2<\|\phi_1\|^2_{L^2(B_{R^*})},$$
a contradiction. This case will not happen.

\textbf{Case 2}. There is $R_1\in(0,R^*)$ such that $\psi(R_1)=0$,  and  either
$\psi\gneqq 0$ in $[0,R_1]$ or $\psi\lneqq0$ in $[0,R_1]$.

Without loss of generality, we assume  $\psi(R_1)=0$ and  $\psi\gneqq
0$ in $[0,R_1]$.
Define
$$\tilde{\psi}(x)=\left\{\begin{array}{lll}\psi(x),\  |x|\leq R_1,\\
0,\   |x|\in(R_1,R^*).\end{array}\right.$$
 Since $\phi_1$ and $\phi_2$
satisfy \eqref{model14}, then
 $$[-\Delta +\frac{1}{2}(U_{\phi_1}+U_{\phi_2})]\tilde{\psi}=\tilde{\psi}-\frac{U_{\phi_1}-U_{\phi_2}}{2}(\phi_1+\phi_2),  \  |x|\leq R_1.$$
 Multiplying this by $\tilde{\psi}$ and integrating by part, we have
 $$\frac{1}{2}A_{\phi_1}(\tilde{\psi})+\frac{1}{2}A_{\phi_2}(\tilde{\psi})=\|\tilde{\psi}\|^2_2-\int_{B_{R^*}}\frac{U_{\phi_1}-U_{\phi_2}}{2}(\phi_1+\phi_2)\tilde{\psi}dx<\|\tilde{\psi}\|^2_2,$$
which leads to a contradiction with \eqref{model27}.
This case does not hold.

 \textbf{Case 3}. There exists  $R_2\in[0,R^*)$ such that $\psi\equiv0$ in $[0,R_2]$, and for any $\varepsilon>0$, $\psi$  changes sign in  $(R_2,R_2+\varepsilon)$.

Notice that
$\phi_1(R_2)=\phi_2(R_2), \ \phi_1^{'}(R_2)=\phi_2^{'}(R_2).$
Applying the variation of constants formula to \eqref{model27},  we obtain
\begin{equation}
\displaystyle\phi_1(r)-\phi_2(r) = T(r,\phi_1)-T(r,\phi_2)
\end{equation}
where
\begin{equation}\label{model32}
T(r,\phi_i)=\int_{R_2}^r\frac{s}{N-2}(U_{\phi_i}(s)-1)\phi_i(s)ds+
\frac{\int_{R_2}^r\frac{s^{N-1}}{N-2}(1-U_{\phi_i}(s))\phi_i(s)ds}{r^{N-2}}
\end{equation}
with $i=1,2$.
For any  $r\in({R_2},{R_2}+\varepsilon)$, we
obtain the following two estimates.
\begin{equation}\label{model28}
\begin{array}{lll}
\displaystyle U_{\phi_i}(r)&=&\displaystyle\int^r_0 |\mathbb{S}^{N-1}|s^{N-1}(s^{2-N}-r^{2-N})|\phi_{i}(s)|^2ds\\
&\leq& Cr^2,
\end{array}
\end{equation}
and
\begin{equation}\label{model29}
\begin{array}{lll}
\displaystyle| U_{\phi_1}(r)-U_{\phi_2}(r)|&\leq&\displaystyle\int^r_0 |\mathbb{S}^{N-1}|s^{N-1}(s^{2-N}-r^{2-N})|\phi_1(s)-\phi_2(s)||\phi_1(s)+\phi_2(s)|ds\\
&=&\displaystyle\int^r_{R_2} |\mathbb{S}^{N-1}|s^{N-1}(s^{2-N}-r^{2-N})|\phi_1(s)-\phi_2(s)||\phi_1(s)+\phi_2(s)|ds\\
&\leq& C \sup\limits_{s\in({R_2},{R_2}+\varepsilon)}|\phi_1(s)-\phi_2(s)|r^2.
\end{array}
\end{equation}
Then for any $r\in(R_2,R_2+\varepsilon)$ with $\varepsilon$  sufficiently small, we conclude from \eqref{model32},  \eqref{model28} and \eqref{model29} that there exists $0<C_\varepsilon<\frac{1}{2}$ small enough  such that
\begin{equation}
\begin{array}{lll}
\displaystyle |\phi_1(r)-\phi_2(r)|
&\leq&\displaystyle\Big|\int^r_{R_2}\frac{s}{N-2}[U_{\phi_1}(s)(\phi_1(s)-\phi_2(s))+(U_{\phi_1}(s)-U_{\phi_2}(s))\phi_2(s)+(\phi_2(s)-\phi_1(s))]ds\Big|\\
\quad &+&\displaystyle\frac{1}{{r^{N-2}}}{\Big|\int_{R_2}^r\frac{s^{N-1}}{N-2}
[(\phi_1(s)-\phi_2(s))+U_{\phi_2}(s)(\phi_2(s)-\phi_1(s))+(U_{\phi_2}(s)-U_{\phi_1}(s))\phi_1(s)]ds\Big|}\\
&\leq&
\displaystyle C_{\varepsilon}\sup\limits_{s\in({R_2},{R_2}+\varepsilon)}|\phi_1(s)-\phi_2(s)|\\
\end{array}
\end{equation}
This implies a contradiction with  $\psi\nequiv0$ in $r\in({R_2},{R_2}+\varepsilon)$. This case is not true.

From the above arguments, we have $\phi_1\equiv\phi_2$.
 The proof is completed.
\end{proof}

Now we are ready to prove the uniqueness of positive ground state solutions of \eqref{model5}.
\begin{thm}
Assume that   $N=3,4,5,6$. Then the    positive  ground state solution  of  \eqref{model5} is uniquely determined.
\end{thm}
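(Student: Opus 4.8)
The plan is to combine the three structural facts already available: the radial symmetry of ground states (Theorem~\ref{theorem1}), the Newton-type reduction of \eqref{model5} to the canonical equation \eqref{model14} (Lemmas~\ref{lemma8} and \ref{lemma2}), and the ODE uniqueness for \eqref{model14} (Lemma~\ref{lemma9}). First, the restriction $N\in\{3,4,5,6\}$ is the range in which \eqref{model5} possesses a positive ground state solution (cf.~\cite{wy1}), so the statement is non-vacuous; by Theorem~\ref{theorem1}, every positive ground state $\phi_R$ is positive, radially symmetric, decreasing, and lies in $C^2(B_R)\cap H^1_0(B_R)$. It therefore suffices to show that \eqref{model5} has at most one positive radial solution that is a ground state.

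Let $\phi_R^{(1)},\phi_R^{(2)}$ be two positive ground state solutions. By \eqref{model13}, each $\phi_R^{(i)}$ satisfies $(-\Delta+U_{\phi_R^{(i)}})\phi_R^{(i)}=\lambda_i\phi_R^{(i)}$ on $B_R$ with $\lambda_i:=\lambda(\phi_R^{(i)})>0$, so by Lemma~\ref{lemma2} the rescaled function $x\mapsto\lambda_i^{-1}\phi_R^{(i)}(x/\sqrt{\lambda_i})$ is a positive radial solution of the canonical equation \eqref{model14} on the ball $B_{R_i^{*}}$, $R_i^{*}=\sqrt{\lambda_i}\,R$. If $R_1^{*}=R_2^{*}$, equivalently $\lambda_1=\lambda_2$, then Lemma~\ref{lemma9} forces these two rescaled functions to coincide, and reversing the scaling gives $\phi_R^{(1)}\equiv\phi_R^{(2)}$, which is the conclusion.

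Thus everything reduces to proving $\lambda_1=\lambda_2$, and this I expect to be the main obstacle, since it is the only place where the particular solution, rather than just the equation, enters. One route is to show that the scaling radius is determined by $R$ alone: if $\Psi_\rho$ is the unique positive radial solution of \eqref{model14} on $B_\rho$ given by Lemma~\ref{lemma9}, the compatibility condition $\rho=\sqrt{\lambda(\varphi)}R$ forced by \eqref{model12} becomes $\rho^{2}\big(\int_{B_\rho}|y|^{2-N}\Psi_\rho^2\,dy-\rho^{2-N}\int_{B_\rho}\Psi_\rho^2\,dy-1\big)=R^{2}$, and if the left-hand side is strictly increasing in $\rho$ (which I would verify by differentiating in $\rho$ and controlling the sign of $\partial_\rho\Psi_\rho$) this has a unique solution, yielding uniqueness even among all positive radial solutions of \eqref{model5}. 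A second, more variational route uses the ground state structure: $\phi_R^{(i)}\in\mathcal N_R$ with $I_R(\phi_R^{(i)})=c_R$ gives $\|\phi_R^{(i)}\|^2=\mathbb{D}(\phi_R^{(i)})=4c_R$ by Lemma~\ref{lemma7}, while \eqref{model13} identifies $\phi_R^{(i)}/\|\phi_R^{(i)}\|_{L^2(B_R)}$ as the principal Dirichlet eigenfunction of $-\Delta+U_{\phi_R^{(i)}}$ on $B_R$, so $\lambda_i$ equals its Rayleigh quotient (as in the proof of Lemma~\ref{lemma5}); testing that Rayleigh quotient against the other normalized ground state, rewriting $\int_{B_R}U_{\phi_R^{(i)}}|\psi|^2$ via the Newton identity of Lemma~\ref{lemma8} as $(\lambda_i+1)\|\psi\|_{L^2(B_R)}^2-\int_{B_R}\int_{B_R}G(x,y)|\phi_R^{(i)}(y)|^2|\psi(x)|^2\,dx\,dy$, and combining with $\mathbb{D}(\phi_R^{(i)})=4c_R$ and the Cauchy--Schwarz inequality for the positive definite kernel $G$ (an equality precisely when $\phi_R^{(1)}\equiv\phi_R^{(2)}$) should likewise force $\lambda_1=\lambda_2$; the delicate point here is to extract a strict gain that is not absorbed by the $t\mapsto tu$ rescaling defining $\mathcal N_R$. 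Once $\lambda_1=\lambda_2$ is established, Lemma~\ref{lemma9} completes the proof.
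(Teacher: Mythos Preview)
Your handling of the case $\lambda_1=\lambda_2$ via Lemmas~\ref{lemma2} and~\ref{lemma9} matches the paper. The gap is entirely in the case $\lambda_1\neq\lambda_2$: neither of your two routes is actually carried out, and you misidentify where the restriction $N\in\{3,4,5,6\}$ enters---it is not merely an existence hypothesis but is used inside the proof.

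The paper's argument for $\lambda_1>\lambda_2$ (say) is concrete and different from both of your sketches. Set $\tilde\lambda=\lambda_1/\lambda_2>1$ and $\tilde\varphi_2(x)=\tilde\lambda^{-1}\varphi_2(x/\sqrt{\tilde\lambda})$; then $\varphi_1$ and $\tilde\varphi_2$ satisfy the \emph{same} radial ODE \eqref{model35} with eigenvalue $\lambda_1$, but $\tilde\varphi_2$ lives on the larger ball $B_{\sqrt{\tilde\lambda}R}$ and is still strictly positive at $|x|=R$. A Wronskian-type identity,
\[
\Big(\frac{\varphi_1}{\tilde\varphi_2}\Big)'(r)=\frac{1}{r^{N-1}\tilde\varphi_2^{\,2}(r)}\int_0^r s^{N-1}\big(U_{\varphi_1}(s)-U_{\tilde\varphi_2}(s)\big)\varphi_1(s)\tilde\varphi_2(s)\,ds,
\]
shows that the ordering at the origin propagates globally: $\varphi_1(0)>\tilde\varphi_2(0)$ forces $\varphi_1>\tilde\varphi_2$ on $[0,R)$, impossible since $\varphi_1(R)=0<\tilde\varphi_2(R)$; and $\varphi_1(0)=\tilde\varphi_2(0)$ forces $\varphi_1\equiv\tilde\varphi_2$ on $B_R$ by the local contraction estimate from Case~3 of Lemma~\ref{lemma9}, also impossible. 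In the remaining case $\varphi_1<\tilde\varphi_2$ on $B_R$, the ground-state hypothesis gives $\mathbb D(\varphi_1)=\mathbb D(\varphi_2)=4c_R$, while the strict pointwise inequality gives $\mathbb D(\varphi_1)<\int_{B_R}\!\int_{B_R}G(x,y)|\tilde\varphi_2(y)|^2|\tilde\varphi_2(x)|^2\,dx\,dy$. One then rewrites this last integral via the Newton identity of Lemma~\ref{lemma8} as a Riesz term minus $R^{2-N}\|\tilde\varphi_2\|_{L^2}^4$ and undoes the scaling $\tilde\varphi_2\mapsto\varphi_2$; the two pieces pick up factors $\tilde\lambda^{N/2-3}$ and $\tilde\lambda^{N-4}$, and it is exactly for $N\le 6$ that these combine (after enlarging the domain of integration back to $B_R$) to give $\mathbb D(\varphi_1)<\mathbb D(\varphi_2)$, the contradiction.

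Your Route~1, if the claimed monotonicity in $\rho$ held, would in fact prove more---uniqueness among \emph{all} positive radial solutions of \eqref{model5}, not just ground states---but you have given no argument for it, and controlling $\partial_\rho\Psi_\rho$ is not obviously easier than the original problem. Your Route~2 is closer in spirit to the paper (it uses the ground-state energy level), but the Cauchy--Schwarz step you describe does not visibly produce a strict gain surviving the Nehari rescaling, and you never explain how the dimension restriction would appear; in the paper it appears only through the scaling exponents above.
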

\begin{proof}
We shall argue  by contradiction. Suppose on the contrary that $\varphi_1$ and $\varphi_2\in H_0^1(B_{R})\bigcap C^2(B_R)$ are two distinct positive ground state solutions of \eqref{model5}.  In view of  \eqref{model13} and \eqref{model12}, we shall finish the proof  by  distinguishing   two cases: $\lambda(\varphi_1)=\lambda(\varphi_2)$ (\emph{the first case})
$\lambda(\varphi_1)\neq \lambda(\varphi_2)$ (\emph{the second case}).

 \textbf{The first Case}.  Since $\lambda(\varphi_1)=\lambda(\varphi_2)$,  by applying Lemma \ref{lemma2},
there exist two distinct positive radial   solutions $\phi_1$ and $\phi_2\in   H_0^1(B_{\sqrt{\lambda(\varphi_1)}R})\bigcap C^2(B_{\sqrt{\lambda(\varphi_1)}R})$ of \eqref{model14}. This implies a contradiction with Lemma \ref{lemma9}. The first case will not happen.

 \textbf{The second  Case}. Note that $\lambda(\varphi_1)\neq \lambda(\varphi_2)$.  Without loss of generality, we assume that $\lambda(\varphi_1)> \lambda(\varphi_2)$. Let $\tilde{\lambda}=\frac{\lambda(\varphi_1)}{\lambda(\varphi_2)}$ and $\tilde{\varphi}_2(x)=\frac{1}{\tilde{\lambda}}\varphi_2(\frac{x}{\sqrt{\tilde{\lambda}}})$ with $x\in B_{\sqrt{\tilde{\lambda}}R}$. Then a direct calculation yields that $\tilde{\varphi}_2$ satisfies the following equation
 \begin{equation}\label{model33}
 (-\Delta+U_{\tilde{\varphi}_2}){\tilde{\varphi}_2}(x)=\lambda(\varphi_1){\tilde{\varphi}_2}(x),\   x\in B_{\sqrt{\tilde{\lambda}}R}.
\end{equation}
This together with \eqref{model13}, shows that both  $\varphi_1$ and $\tilde{\varphi}_2$ satisfy
\begin{equation}\label{model34}
 (-\Delta+U_{\varphi})\varphi(x)=\lambda(\varphi_1)\varphi(x),\   x\in B_R,
\end{equation}
i.e.
\begin{equation}\label{model35}
\varphi^{''}(r)+\frac{N-1}{r}\varphi^{'}(r)=(U_\varphi(r)-\lambda(\varphi_1))\varphi(r).
\end{equation}

Note that $\varphi_1'(0)=\tilde{\varphi}_2'(0)=0.$ If $\varphi_1(0)>\tilde{\varphi}_2(0),$  by integrating \eqref{model35}, we obtain
$$\int_{0}^r [s^{N-1}\varphi_1''(s)\tilde{\varphi}_2(s)+(N-1)s^{N-2}\varphi_1'(s)\tilde{\varphi}_2(s)]ds
=\int_0^rs^{N-1}(U_{\varphi_1}(s)-\lambda(\varphi_1))\varphi_1(s)\tilde{\varphi}_2(s)ds.$$
So using the fact $\varphi_1'(0)=\tilde{\varphi}_2'(0)=0,$ there holds that
$$r^{N-1}\varphi_1'(r)\tilde{\varphi}_2(r)-\int_{0}^r s^{N-1}\varphi_1'(s)\tilde{\varphi}_2'(s)ds=\int_0^rs^{N-1}(U_{\varphi_1}(s)-\lambda(\varphi_1))\varphi_1(s)\tilde{\varphi}_2(s)ds.$$
Similar arguments can lead to
$$r^{N-1}\tilde{\varphi}_2'(r)\varphi_1(r)\int_{0}^r s^{N-1}\varphi_1'(s)\tilde{\varphi}_2'(s)ds=\int_0^rs^{N-1}(U_{\tilde{\varphi}_2}(s)-\lambda(\varphi_1))\varphi_1(s)\tilde{\varphi}_2(s)ds.$$
Thus
\begin{equation}\label{model36}
(\frac{\varphi_1(r)}{\tilde{\varphi}_2(r)})'=\frac{1}{r^{N-1}{\tilde{\varphi}}_2^2(r)}
\int_0^rs^{N-1}(U_{\varphi_1}(s)-U_{\tilde{\varphi}_2}(s))\varphi_1(s)\tilde{\varphi}_2(s)ds.
\end{equation}
Since $\varphi_1(0)>\tilde{\varphi}_2(0),$ this implies that $\varphi_1(r)>\tilde{\varphi}_2(r)$ on $[0,t)$ with small $t>0$. Notice that $(\frac{\varphi_1(r)}{\tilde{\varphi}_2(r)})'>0$ on $[0,t)$, which can yields that $\varphi_1(r)>\tilde{\varphi}_2(r)$ on $B_R.$ This  leads to a contradiction with
the definition of $\tilde{\varphi}_2.$

If $\varphi_1(0)<\tilde{\varphi}_2(0),$  by using \eqref{model36},  we obtain $\varphi_1(r)<\tilde{\varphi}_2(r)$ on $B_R.$
Since $\varphi_1$ and $\varphi_2$ are  two positive ground state  solutions of \eqref{model13}, so
\begin{equation}\label{model37}
I_R(\varphi_1)=\frac{1}{4}\int_{B_R}\int_{B_R}G(x,y)|\varphi_1(y)|^2|\varphi_1(x)|^2 dxdy
=\frac{1}{4}\int_{B_R}\int_{B_R}G(x,y)|\varphi_2(y)|^2|\varphi_2(x)|^2 dxdy=I_R(\varphi_2).
\end{equation}
According to Lemma \ref{lemma8}, since $\tilde{\lambda}>1$,  we conclude that when $N=3,4,5,6$, there holds
\begin{equation}
\begin{array}{lll}
\displaystyle\int_{B_R}\int_{B_R}G(x,y)|\varphi_1(y)|^2|\varphi_1(x)|^2 dxdy\\
 <\displaystyle\int_{B_R}\int_{B_R}G(x,y)|\tilde{\varphi}_2(y)|^2|\tilde{\varphi}_2(x)|^2 dxdy\\
=\displaystyle\int_{B_R}\int_{B_R}\frac{|\tilde{\varphi}_2(y)|^2|\tilde{\varphi}_2(x)|^2}{|y-x|^{N-2}}dxdy-R^{2-N}\int_{B_R}\int_{B_R}|\tilde{\varphi}_2(y)|^2|\tilde{\varphi}_2(x)|^2 dxdy\\
=\displaystyle{\tilde{\lambda}}^{\frac{N}{2}-3}\int_{B_{\frac{R}{\sqrt{\tilde{\lambda}}}}}\int_{B_{\frac{R}{\sqrt{\tilde{\lambda}}}}}
\frac{|\varphi_2(y)|^2|\varphi_2(x)|^2}{|y-x|^{N-2}}dxdy-{\tilde{\lambda}}^{N-4}R^{2-N}\int_{B_{\frac{R}{\sqrt{\tilde{\lambda}}}}}
\int_{B_{\frac{R}{\sqrt{\tilde{\lambda}}}}}|\varphi_2(y)|^2|\varphi_2(x)|^2 dxdy\\
=\displaystyle\int_{B_{\frac{R}{\sqrt{\tilde{\lambda}}}}}\int_{B_{\frac{R}{\sqrt{\tilde{\lambda}}}}}
\frac{|\varphi_2(y)|^2|\varphi_2(x)|^2}{|y-x|^{N-2}}dxdy-R^{2-N}\int_{B_{\frac{R}{\sqrt{\tilde{\lambda}}}}}
\int_{B_{\frac{R}{\sqrt{\tilde{\lambda}}}}}|\varphi_2(y)|^2|\varphi_2(x)|^2 dxdy\\
<\displaystyle\int_{B_R}\int_{B_R}G(x,y)|\varphi_2(y)|^2|\varphi_2(x)|^2 dxdy.
\end{array}
\end{equation}
This implies a contradiction with \eqref{model37}.

If $\varphi_1(0)=\tilde{\varphi}_2(0),$ by applying similar arguments as \textbf{Case 3} in Lemma \ref{lemma9}, we  deduce from \eqref{model35} that there exists $\delta_1>0$ small enough and $0<C_{\delta_1}<\frac{1}{2}$   such that
\begin{equation}
\begin{array}{lll}
\displaystyle |\varphi_1(r)-\tilde{\varphi}_2(r)|
&\leq&\displaystyle\Big|\int^r_{0}\frac{s}{N-2}[U_{\varphi_1}(s)(\varphi_1(s)-\tilde{\varphi}_2(s))+
(U_{\varphi_1}(s)-U_{\tilde{\varphi}_2}(s))\tilde{\varphi}_2(s)+\lambda(\varphi_1)(\tilde{\varphi}_2(s)-\varphi_1(s))]ds\Big|\\
\quad &+&\displaystyle\frac{1}{r^{N-2}}{\Big|\int_{0}^r\frac{s^{N-1}}{N-2}
[\lambda(\varphi_1)(\varphi_1(s)-\tilde{\varphi}_2(s))+U_{\tilde{\varphi}_2}(s)(\tilde{\varphi}_2(s)-\varphi_1(s))
+(U_{\tilde{\varphi}_2}(s)-U_{\varphi_1}(s))\varphi_1(s)]ds\Big|}\\
&\leq&
\displaystyle C_{\delta_1}\sup\limits_{s\in({0},{0}+{\delta_1})}|\varphi_1(s)-\tilde{\varphi}_2(s)|.
\end{array}
\end{equation}
Thus, $\varphi_1(r)=\tilde{\varphi}_2(r)$ on $[0,\delta_1]$. By applying the iteration arguments with starting point $\delta_1$, we can finally obtain that
that $\varphi_1(r)=\tilde{\varphi}_2(r)$ on $B_R.$ This implies  a contradiction with
the definition of $\tilde{\varphi}_2.$
From the above arguments, we see that the second  case is not valid.

Therefore,   we complete the proof.
\end{proof}

\section{Convergence}
In this section, we shall show the convergence of the unique positive  ground state solution $\phi_R$ of \eqref{model5} as $R\to\infty.$

 First,  consider the following Choquard equation in full space $\mathbb{R}^N$
\begin{equation}\label{model21}
-\Delta u+u=\Big(\int_{\mathbb{R}^N}\frac{|u^2(y)|}{|x-y|^{N-2}}dy\Big)u,\ \ x\in \mathbb{R}^N.
\end{equation}
The corresponding energy functional  $I_\infty:H^1(\mathbb{R}^N)\to \mathbb{R}$ associated to \eqref{model21} is
\begin{equation}\label{model22}
I_\infty(u)=\frac{1}{2}\int_{\mathbb{R}^N}(|\nabla u|^2+|u|^2)dx-
\frac{1}{4}\int_{\mathbb{R}^N}\int_{\mathbb{R}^N}\frac{|u(y)|^2|u(x)|^2}{|x-y|^{N-2}}dxdy,
\end{equation}
Let $\mathcal{N}_\infty=\{u\in H^1(\mathbb{R}^N)\backslash{\{0\}}:\langle I_\infty'(u),u\rangle=0\}$ and $c_\infty=\inf\limits_{u\in \mathcal{N}_\infty}I_\infty(u).$
As we know, $c_\infty$ can be achieved by  a unique positive radial solution $\phi_\infty$ of \eqref{model22} (see \cite{wy}). Notice that
$c_R=I_R(\phi_R)$.
Then we have the following lemma.
\begin{prop}
There holds $\lim\limits_{R\to \infty}c_R=c_\infty.$
\end{prop}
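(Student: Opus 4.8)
The plan is to prove $\lim_{R\to\infty}c_R = c_\infty$ by establishing the two inequalities $\limsup_{R\to\infty}c_R \le c_\infty$ and $\liminf_{R\to\infty}c_R \ge c_\infty$ separately. For the upper bound I would use the cut-off function technique announced in the introduction: fix the unique positive radial ground state $\phi_\infty$ of \eqref{model21}, pick a smooth cut-off $\eta_R$ equal to $1$ on $B_{R/2}$, supported in $B_R$, with $|\nabla\eta_R|\le C/R$, and set $u_R=\eta_R\phi_\infty\in H^1_0(B_R)$. Using the exponential (or polynomial) decay of $\phi_\infty$ at infinity one shows $\|u_R\|^2\to\|\phi_\infty\|^2$ and the nonlocal term $\mathbb{D}(u_R)$ (with the kernel $G$) converges to $\int\!\int |\phi_\infty|^2|\phi_\infty|^2|x-y|^{2-N}$, because $G(x,y)=|x-y|^{2-N}-(\tfrac{|x|}{R}|y-\tilde x|)^{2-N}$ and the subtracted term is uniformly small on compact sets as $R\to\infty$ while the tails are negligible. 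By Lemma~\ref{lemma7}(ii)--(iii), $c_R\le \sup_{t>0}I_R(tu_R)=\tfrac14\|u_R\|^4/\mathbb{D}(u_R)\to \tfrac14\|\phi_\infty\|^4/\mathbb{D}_\infty(\phi_\infty)=c_\infty$, giving $\limsup_{R\to\infty}c_R\le c_\infty$.

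For the lower bound I would argue by contradiction or directly via compactness. Since $G(x,y)\le |x-y|^{2-N}$ pointwise, for any $u\in H^1_0(B_R)$ extended by zero we have $\mathbb{D}(u)\le \int\!\int|u(y)|^2|u(x)|^2|x-y|^{2-N}$, hence $I_R(u)\ge I_\infty(u)$ on the extended function, and consequently $\sup_{t>0}I_R(tu)\ge \sup_{t>0}I_\infty(tu)\ge c_\infty$ by Lemma~\ref{lemma7}(iii) applied in $\mathbb{R}^N$. Taking the infimum over $u\in H^1_0(B_R)\setminus\{0\}$ gives immediately $c_R\ge c_\infty$ for every $R$, which is in fact stronger than $\liminf c_R\ge c_\infty$. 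Combining this monotonic-type bound with the $\limsup$ estimate above yields $\lim_{R\to\infty}c_R=c_\infty$.

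The main obstacle I anticipate is the precise control of the nonlocal energy of the cut-off family $u_R=\eta_R\phi_\infty$: one must verify both that $\mathbb{D}(u_R)\to \mathbb{D}_\infty(\phi_\infty)$ \emph{and} that it stays bounded away from $0$, and this requires the decay estimate on $\phi_\infty$ (available from \cite{wy} or \cite{vv}) together with a careful split of the double integral into the region where $x,y$ both lie in a large fixed ball (where $G\to|x-y|^{2-N}$ uniformly) and the complementary tail region (where the integrand is small because $\phi_\infty$ decays). The harmonic-mirror term $(\tfrac{|x|}{R}|y-\tilde x|)^{2-N}$ must be shown to tend to $0$ locally uniformly as $R\to\infty$, which follows since $|\tilde x|=R^2/|x|\to\infty$ on compact sets. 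Everything else is the routine Brezis--Lieb / rearrangement-free bookkeeping already used throughout the paper, so I would keep those computations brief and cite Lemma~\ref{lemma7} for the mountain-pass/Nehari characterisations.
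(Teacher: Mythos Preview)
Your proposal is correct and follows the same two-sided strategy as the paper: $c_R\ge c_\infty$ from the pointwise bound $G(x,y)\le|x-y|^{2-N}$, and $\limsup_{R\to\infty} c_R\le c_\infty$ via the cut-off $\Psi_R=\eta_R\phi_\infty$ together with the Nehari scaling of Lemma~\ref{lemma7}. The only difference is that, in place of your direct kernel/tail-splitting argument for $\mathbb{D}(\Psi_R)\to\mathbb{E}(\phi_\infty)$, the paper exploits the radiality of $\Psi_R$ and Lemma~\ref{lemma8} to reduce the mirror term to exactly $R^{2-N}\|\Psi_R\|_{L^2}^4\to 0$, which sidesteps any appeal to the decay of $\phi_\infty$.
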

\begin{proof}
We first prove that $c_R\geq c_\infty.$ Indeed, by similar  arguments as in Lemma \ref{lemma7}, there exists a unique $t_R>0$ such that $t_R\phi_R\in \mathcal{N}_\infty.$ Since
$\langle I'_R(\phi_R),\phi_R\rangle=0$, we see that $\langle I'_\infty(\phi_R),\phi_R\rangle<0$, which implies that
$t_R\in(0,1)$.
Hence
 $$c_\infty\leq I_\infty(t_R\phi_R)=\frac{1}{4}t_R^2\|\phi_R\|^2\leq
\frac{1}{4}\|\phi_R\|^2 = I_R(\phi_R)=c_R.$$

On the other hand, we show $\lim\limits_{R\to\infty}c_R\leq c_\infty.$ Let $\eta_R\in C^\infty(\mathbb{R}^N)$ be a radial function such that $\eta_R=1$ in
$B_{\frac{R}{2}}$, $\eta_R\in(0,1)$ in $B_R\backslash B_{\frac{R}{2}}$,
$\eta_R=0$ in $\mathbb{R}^N\backslash B_R$ and $|\nabla \eta_R|\leq \frac{2}{R}.$
Define $\Psi_R=\eta_R\phi_\infty.$
In fact,  by standard arguments, we can deduce that
$\Psi_R\to \phi_\infty$ in $H^1(\mathbb{R}^N)$ as $R\to\infty.$  In addition,  by using Lemma \ref{lemma8},
\begin{equation}\label{model30}
\begin{split}
\lim\limits_{R\to\infty}\int_{B_R}\int_{B_R}G(x,y)|\Psi_R(y)|^2|\Psi_R(x)|^2dxdy
=&\lim\limits_{R\to\infty}\Big[\int_{B_R}\int_{B_R}|x-y|^{2-N}|\Psi_R(y)|^2|\Psi_R(x)|^2dxdy+R^{2-N}\|\Psi_R\|_{L^2}^4\Big]\\
=& \int_{\mathbb{R}^N}\int_{\mathbb{R}^N}|x-y|^{2-N}|\phi_\infty(y)|^2|\phi_\infty(x)|^2dxdy:=\mathbb{E}(\phi_\infty).
\end{split}
\end{equation}
Then $\lim\limits_{R\to\infty}I_R(\Psi_R)=I_\infty(\phi_\infty)=c_\infty.$ By using Lemma \ref{lemma7},
  there exists a unique $s_R>0$ such that $s_R\Psi_R\in \mathcal{N}_R$. Moreover,   since $\phi_\infty\in \mathcal{N}_\infty,$  we obtain $$s_R=\left(\frac{\|\Psi_R\|^2}{\mathbb{D}(\Psi_R)}\right)^\frac{1}{2}\to \left(\frac{\|\phi_\infty\|^2}{\mathbb{E}(\phi_\infty)}\right)^\frac{1}{2} =1$$ as $R\to \infty.$ Thus
$$\lim\limits_{R\to\infty}c_R\leq \lim\limits_{R\to\infty}I_R(s_R\Psi_R)=\lim\limits_{R\to\infty}I_R(\Psi_R)=I_\infty(\phi_\infty)=c_\infty.$$
Therefore $c_R\to c_\infty$ as $R\to\infty.$
\end{proof}
\begin{thm}
Assume that   $N=3,4,5,6$. Then the    positive  ground state solution  of  \eqref{model5}  converges  to the unique positive ground state solution of  \eqref{model1}  as    $R\to\infty$.
\end{thm}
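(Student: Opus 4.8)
The plan is to combine the convergence of energy levels $c_R\to c_\infty$ (already proved in the Proposition) with a compactness argument for the family $\{\phi_R\}$ of unique positive ground state solutions, and then to identify the limit via the uniqueness of $\phi_\infty$. Throughout I identify $\phi_R$ with its zero extension to $\mathbb{R}^N$, so that $\{\phi_R\}\subset H^1(\mathbb{R}^N)$.

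First I would show that $\{\phi_R\}$ is bounded in $H^1(\mathbb{R}^N)$. Since $\phi_R\in\mathcal{N}_R$ we have $\|\phi_R\|^2=\mathbb{D}(\phi_R)$ and $I_R(\phi_R)=\frac14\|\phi_R\|^2=c_R$, and because $c_R\to c_\infty$ the sequence $c_R$ is bounded; hence $\|\phi_R\|^2=4c_R$ is bounded (indeed convergent to $4c_\infty$). Next, since by Proposition~\ref{proposition1} each $\phi_R$ is radially symmetric and decreasing, the family lies in the radial subspace $H^1_{rad}(\mathbb{R}^N)$, which embeds compactly into $L^p(\mathbb{R}^N)$ for $2<p<2^*$ (Strauss' lemma). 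Therefore, up to a subsequence, $\phi_R\rightharpoonup \phi$ weakly in $H^1(\mathbb{R}^N)$ and strongly in $L^p$ for such $p$, with $\phi\geq 0$ radial and nonincreasing. The strong $L^p$ convergence, together with the Hardy--Littlewood--Sobolev inequality controlling the Riesz potential term, lets me pass to the limit in the nonlocal quadratic form and show $\mathbb{D}(\phi_R)=\int\int |x-y|^{2-N}|\phi_R|^2|\phi_R|^2 - R^{2-N}\|\phi_R\|_{L^2}^4 \to \mathbb{E}(\phi)$ (using Lemma~\ref{lemma8} to rewrite $\mathbb{D}$, and noting the $R^{2-N}\|\phi_R\|_{L^2}^4$ term vanishes since $\|\phi_R\|_{L^2}^2$ stays bounded). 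Passing to the limit in the weak formulation of \eqref{model5} then shows $\phi$ is a nonnegative radial weak solution of \eqref{model21}.

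The crux is to rule out $\phi\equiv 0$ and to upgrade weak to strong convergence. Here the energy comparison does the work: from $\|\phi_R\|^2=\mathbb{D}(\phi_R)$ and $\phi_R\rightharpoonup\phi$ one gets, using weak lower semicontinuity, $\|\phi\|^2\leq\liminf\|\phi_R\|^2=4c_\infty$; and since $\mathbb{D}(\phi_R)\to\mathbb{E}(\phi)$, if $\phi\equiv 0$ then $\mathbb{E}(\phi)=0$, forcing $\|\phi_R\|^2=\mathbb{D}(\phi_R)\to 0$, i.e. $c_\infty=0$, contradicting $c_\infty>0$ (which holds by the analogue of Lemma~\ref{lemma7}(i) for $I_\infty$). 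So $\phi\not\equiv 0$, hence $\phi\in\mathcal{N}_\infty$, giving $I_\infty(\phi)\geq c_\infty$, i.e. $\|\phi\|^2\geq 4c_\infty$. Combined with the reverse inequality above this yields $\|\phi\|^2=4c_\infty=\lim\|\phi_R\|^2$, so $\phi_R\to\phi$ strongly in $H^1(\mathbb{R}^N)$, and $\phi$ is a positive ground state solution of \eqref{model21}, equivalently of \eqref{model1} after the harmless rescaling. By the uniqueness of the positive radial ground state $\phi_\infty$ of \eqref{model1} (cited from \cite{wy,vv} and used in the Proposition), $\phi=\phi_\infty$.

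Finally, since the limit $\phi_\infty$ is independent of the subsequence, a standard subsequence-of-every-subsequence argument promotes the convergence to the full family: $\phi_R\to\phi_\infty$ in $H^1(\mathbb{R}^N)$ as $R\to\infty$. I expect the main obstacle to be the careful handling of the nonlocal term under the extension-by-zero and the passage $R\to\infty$ — specifically justifying $\mathbb{D}(\phi_R)\to\mathbb{E}(\phi)$ uniformly enough to conclude, which is exactly where Lemma~\ref{lemma8}'s decomposition of $G$ into the Riesz kernel minus the lower-order $R^{2-N}$ correction, plus the radial compact embedding, is essential; the dimensional restriction $N=3,4,5,6$ enters only through the uniqueness theorem for \eqref{model5}, not through this convergence step.
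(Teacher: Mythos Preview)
Your proposal is correct and follows essentially the same route as the paper's proof: boundedness of $\{\phi_R\}$ from $\|\phi_R\|^2=4c_R\to 4c_\infty$, compactness via the radial embedding, Lemma~\ref{lemma8} to pass to the limit in the nonlocal term, exclusion of the zero limit via $c_\infty>0$, identification of the limit by uniqueness, and norm convergence to upgrade weak to strong. The only cosmetic difference is that the paper first identifies $\psi=\phi_\infty$ using uniqueness of positive radial \emph{solutions} of \eqref{model21} and then reads off strong convergence from $\|\phi_R\|^2\to\|\phi_\infty\|^2$, whereas you first squeeze $\|\phi\|^2=4c_\infty$ to get strong convergence and only afterwards invoke uniqueness; your explicit subsequence-of-subsequence remark is also a detail the paper leaves implicit.
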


\begin{proof}
It suffices to prove that $\phi_R\to \phi_\infty $ in $H^1(\mathbb{R}^N)$ as $R\to \infty$. Since $c_R\to c_\infty$ as $R\to\infty,$  we conclude that  $\{\phi_R\}$ is uniformly bounded with respect to $R$. Then there exists a nonnegative radially symmetric decreasing function $\psi\in H^1(\mathbb{R}^N)$ such that   $\phi_R\rightharpoonup \psi$ weakly in
$H^1(\mathbb{R}^N)$  and $\phi_R\rightarrow \psi$ strongly in
$L^s(\mathbb{R}^N)$ with $s\in(2,6)$ due to the radial symmetry of $\phi_R$.
In view of \eqref{model30},  $$0\neq\lim\limits_{R\to \infty}\int_{B_R}\int_{B_R}G(x,y)|\phi_R(y)|^2|\phi_R(x)|^2dxdy=\int_{\mathbb{R}^N}\int_{\mathbb{R}^N}|x-y|^{2-N}|\psi(y)|^2|\psi(x)|^2dxdy.$$
Then we have  $\psi\neq   0$. Furthermore, for any $v\in C_0^\infty(\mathbb{R}^N)$,
$$0=\lim\limits_{R\to \infty}\langle I'_R(\phi_R),v\rangle=\langle I'_\infty(\psi) ,v\rangle.$$
The uniqueness of the positive radial  solution of  \eqref{model21} yields that $\psi=\phi_\infty.$
This combined with the fact that $\lim\limits_{R\to \infty}c_R= c_\infty$,  implies that $\phi_R\to \phi_\infty $ strongly in $H^1(\mathbb{R}^N)$ as $R\to \infty$.
The proof is completed.
\end{proof}


\begin{thebibliography}{1}

\bibitem{cvc}  Coffman C V. Uniqueness of the ground state solution for  $\Delta u-u+u^3=0$  and variational characterization of other solutions.  Arch. Ration.  Mech. Anal. \textbf{46}, 81-95 (1972).

\bibitem{ccs} Cingolani  S, Clapp M, Secchi  S.  Multiple solutions to a magnetic nonlinear Choquard equation. Z. angew. Math. Phys. \textbf{63}, 233-248 (2012).

\bibitem{cp} Choquard  P, Stubbe J,   Vuffracy M.  Stationary solutions of
the Schr\"{o}dinger-Newton model-an ODE approach. Differ.
Integral Equ. \textbf{27}, 665-679 (2008)

\bibitem{fs}  Feliciangeli D , Seiringer R . Uniqueness and non-negeneracy of minimizers of the Pekar functional on a ball. SIAM J. Math. Anal.
\textbf{52}, 605-622 (2020).

\bibitem{gmv} Ghimenti M, Moroz V, Van Schaftingen J. Least action nodal solutions for the quadratic Choquard equation. Proc. Amer. Math. Soc. \textbf{145}, 737-747 (2017).

\bibitem{gn}   Gidas B, Ni W M,  Nirenberg  L. Symmetry and related properties via the
maximum principle. Comm. Math. Phys.  \textbf{68}, 209-243 (1979).

\bibitem{gmvj} Ghimenti M, Van Schaftingen J. Nodal solutions for the Choquard equation. J. Funct. Anal.  \textbf{271}, 107-135 (2016).




\bibitem{kwk}  Kwong  M K.  Uniqueness of positive solutions of $\Delta u-u+u^p=0$. Arch. Ration. Mech. Anal. \textbf{105},
243-266  (1989).

\bibitem{LEH} Lieb  E H.  Existence and uniqueness of the minimizing solution of Choquard nonlinear equation. Stud. Appl. Math. \textbf{57}, 93-105 (1977).

\bibitem{Lieb-Loss}
Lieb E H, Loss M. Analysis, volume 14 of graduate studies in mathematics. American Mathematical Society. Providence, RI, 2001.



\bibitem{Lieb-Simon}
Lieb E H, Simon B. The Hartree-Fock theory for Coulomb systems. Comm.   Math.  Phys. \textbf{53}, 185-194 (1977).


\bibitem{LPL} Lions  P L.  The Choquard equation and related questions. Nonlinear Anal. \textbf{4}, 1063-1073 (1980).

\bibitem{ms} Mclead K,  Serrin J. Uniqueness of positive radial solutions of $\Delta u+f(u)=0$ in $\mathbb{R}^N$.
Arch. Ration. Mech. Anal. \textbf{99}, 115-145 (1987).

\bibitem{vv}Moroz  V, Van Schaftingen  J.  Groundstates of nonlinear Choquard equations: existence, qualitative properties and decay estimates. J. Funct. Anal. \textbf{265}, 153-184 (2014).




\bibitem{MZ} Ma  L, Zhao  L.  Classification of positive solitary solutions of the nonlinear Choquard equation. Arch. Ration. Mech. Anal. \textbf{195}, 455-467 (2010).






\bibitem{rd} Ruiz D, Van Schaftingen J. Odd symmetry of least energy nodal solutions for the Choquard equation. J. Differ. Equ. \textbf{264}, 1231-1262 (2018).


\bibitem{wy} Wang T, Yi T S. Uniqueness of positive solutions of the Choquard type equations. Appl. Anal. \textbf{96}, 409-417 (2017).

\bibitem{wy1} Wang T, Yi T S. Symmetry of separable functions and its applications to Choquard type equations.  Calc. Var. Partial Differ. Equ.  \textbf{59}, 5 (2020).

\bibitem{xcl}  Xiang C L. Uniqueness and nondegeneracy of ground states for Choquard equations in three dimensions. Calc. Var. Partial Differ. Equ. \textbf{55}, 134 (2016).







\end{thebibliography}
\end{document}